\documentclass{article}
\usepackage[utf8]{inputenc}
\usepackage[T1]{fontenc}
\usepackage[utf8]{inputenc}
\usepackage{lmodern}
\usepackage{amsmath,amssymb}
\usepackage{geometry}
\usepackage{amsthm}
\usepackage[british]{babel}
\def\1{\mathbbm{1}}
\usepackage{graphicx}
\usepackage{dsfont}
\usepackage{enumerate}
\usepackage{bbm}
\usepackage{bigints}
\usepackage{color}
\usepackage{tikz}
\usepackage{comment}

\geometry{hscale=0.8,vscale=0.8,centering}

\newtheorem{theorem}{Theorem}

\newtheorem{proposition}{Proposition}
\newtheorem{corollary}{Corollary}
\newtheorem{lemma}{Lemma}

\newtheorem{assumption}{Assumption}
\newtheorem*{assumptiona}{Assumption (A')}

\newtheorem*{doeblin}{Doeblin condition}

\theoremstyle{definition}
\newtheorem{definition}{Definition}

\theoremstyle{remark}
\newtheorem{remark}{Remark}

\def\L{\mathbb{L}}
\def\E{\mathbb{E}}
\def\P{\mathbb{P}}
\def\R{\mathbb{R}}
\def\Q{\mathbb{Q}}
\def\N{\mathbb{N}}
\def\1{\mathbbm{1}}

\def\Z{\mathbb{Z}}

\def\cE{{\mathcal E}}
\def\cF{{\mathcal F}}

\def\cC{\mathcal{C}}
\def\cM{\mathcal{M}}
\def\cB{\mathcal{B}}
\def\cA{\mathcal{A}}
\def\cT{\mathcal{T}}
\def\cN{\mathcal{N}}
\def\Id{\text{Id}}

\title{An ergodic theorem for asymptotically periodic time-inhomogeneous Markov processes, with application to quasi-stationarity with moving boundaries.}
\author{William Oçafrain$^1$}
\date{\today}

\begin{document}

\maketitle

\begin{abstract}
    This paper deals with ergodic theorems for particular time-inhomogeneous Markov processes, whose time-inhomogeneity is asymptotically periodic. Under a Lyapunov/minorization condition, it is shown that, for any measurable  bounded function $f$, the time average $\frac{1}{t} \int_0^t f(X_s)ds$ converges in $\L^2$ towards a limiting distribution, starting from any initial distribution for the process $(X_t)_{t \geq 0}$. This convergence can be improved to an almost sure convergence under an additional assumption on the initial measure. This result will be then applied to show the existence of a quasi-ergodic distribution for processes absorbed by an asymptotically periodic moving boundary, satisfying a conditional Doeblin condition. 
    \end{abstract}

\textit{Key words: ergodic theorem; law of large numbers; time-inhomogeneous Markov processes; quasi-stationarity; quasi-ergodic distribution; moving boundaries.}

\footnotetext[1]{Université de Lorraine, CNRS, Inria, IECL, UMR 7502, F-54000, Nancy, France.\\E-mail: w.ocafrain@hotmail.fr}

\section*{Notation}
Throughout we shall use the following notation:
\begin{itemize}
    \item $\N = \{1,2, \ldots, \}$ and $\Z_+ = \{0\} \cup \N$.
    \item $\cM_1(E)$ denotes the space of the probability measures whose support is included in $E$.
    \item $\cB(E)$ denotes the set of the measurable bounded functions defined on $E$.
    \item $\cB_1(E)$: denotes the set of the measurable functions $f$ defined on $E$ such that $\|f\|_\infty \leq 1$.
    \item \textcolor{black}{For all $\mu \in \cM_1(E)$ and $p \in \N$, $\L^p(\mu)$ denotes the set of the measurable functions $f : E \mapsto \R$ such that $\int_E |f(x)|^p \mu(dx) < + \infty$.}
    \item For any $\mu \in \cM_1(E)$ and $f \in \L^1(\mu)$, denote 
    $$\mu(f) := \int_E f(x) \mu(dx).$$
\item For any positive function $\psi$,
$$\cM_1(\psi) := \{\mu \in \cM_1(E) : \mu(\psi) < + \infty\}.$$
    \item $\Id$ denotes the Identity operator.
\end{itemize}

\section{Introduction}

In general, an ergodic theorem for a Markov process $(X_t)_{t \geq 0}$ and probability measure $\pi$ refers to the almost sure convergence \begin{equation} \label{ergodic-theorem}\frac{1}{t} \int_0^t f(X_s)ds \underset{t \to \infty}{\longrightarrow} \pi(f),~~~~\forall f \in \L^1(\pi). \end{equation}  
In the time-homogeneous setting, such an ergodic theorem holds for positive Harris recurrent Markov processes with the limiting distribution $\pi$ corresponding to an invariant measure for the underlying Markov process. For time-inhomogeneous Markov processes, such a result does not hold in general (in particular the notion of invariant measure is in general not well-defined), 
except for specific types of time-inhomogeneity such as \textit{periodic time-inhomogeneous Markov processes}, defined as time-inhomogeneous Markov processes for which there exists $\gamma > 0$ such that, for any $s \leq t$, $k \in \Z_+$ and $x$, 
\begin{equation}
\label{periodicity}
\P[X_t \in \cdot | X_s = x] = \P[X_{t+k\gamma} \in \cdot | X_{s+k \gamma} = x].\end{equation}
In other words, a time-inhomogeneous Markov process is periodic when the transition law between any times $s$ and $t$ remains unchanged when the time interval $[s,t]$ is shifted by a multiple of the period $\gamma$. In particular, this implies that, for any $s \in [0,\gamma)$, the Markov chain $(X_{s+n \gamma})_{n \in \Z_+}$ is time-homogeneous.  
This fact allowed Höpfner et al. (in \cite{HK2010,hopfner2016ergodicity,hopfner2016ergodicity1}) to show that, if the skeleton Markov chain $(X_{n \gamma})_{n \in \Z_+}$ is Harris recurrent, then the chains $(X_{s + n \gamma})_{n \in \Z_+}$, for all $s \in [0,\gamma)$, are also Harris recurrent and $$\frac{1}{t} \int_0^t f(X_s)ds \underset{t \to \infty}{\longrightarrow} \frac{1}{\gamma} \int_0^\gamma \pi_s(f)ds,~~~~\text{almost surely, from any initial measure,}$$
where $\pi_s$ is the invariant measure for $(X_{s+n \gamma})_{n \in \Z_+}$.

This paper aims to prove a similar result for time-inhomogeneous Markov processes said to be \textit{asymptotically periodic}. Roughly speaking (a precise definition of which will be explicitly given later), an asymptotically periodic Markov process is such that, given a time interval $T \geq 0$, its transition law on the interval $[s,s+T]$ is asymptotically \textit{"close to"} the one, on the same interval, of a periodic time-inhomogeneous Markov process called an \textit{auxiliary Markov process}, when $s \to \infty$. This definition is very similar to the notion of \textit{asymptotic homogeneization}, defined as follows in \cite[Subsection 3.3]{BCG2017}: a time-inhomogeneous Markov process $(X_t)_{t \geq 0}$ is said to be \textit{asymptotically homogeneous} if there exists a time-homogeneous Markovian semigroup $(Q_t)_{t \geq 0}$ such that, for all $s \geq 0$, \begin{equation}\lim_{t \to \infty} \sup_{x} \left\| \P[X_{t+s} \in \cdot | X_t = x] - \delta_x Q_s\right\|_{TV} = 0,\end{equation}
where, for two positive measures with finite mass $\mu_1$ and $\mu_2$, $\| \mu_1 - \mu_2\|_{TV}$ is the \textit{total variation distance} between $\mu_1$ and $\mu_2$:
\begin{equation}\| \mu_1 - \mu_2\|_{TV} := \sup_{f \in \cB_1(E)} |\mu_1(f) - \mu_2(f)|.\end{equation}
In particular, it is well-known (see \cite[\textcolor{black}{Theorem 3.11}]{BCG2017}) that, under this suitable additional conditions, an asymptotically homogeneous Markov process converges towards a probability measure which is invariant for $(Q_t)_{t \geq 0}$. It is similarly expected that an asymptotically periodic process has the same asymptotic properties as a periodic Markov process; in particular an ergodic theorem holds for the asymptotically periodic process.

The main result of this paper provides for an asymptotically periodic Markov process to satisfy
\begin{equation}\label{ergodic}\frac{1}{t} \int_0^t f(X_s)ds \underset{t \to \infty}{\overset{\L^2(\P_{0,\mu})}{\xrightarrow{\hspace*{1.2cm}}}} \frac{1}{\gamma} \int_0^\gamma \beta_s(f) ds,~~~~\forall f \in \cB(E), \forall \mu \in \cM_1(E),\end{equation}
whereby $\P_{0,\mu}$ is a probability measure under which $X_0 \sim \mu$, and
where $\beta_s$ is the limiting distribution of the skeleton Markov chain $(X_{s+n \gamma})_{n \in \Z_+}$,
if it satisfies a Lyapunov-type condition, a local Doeblin condition (defined further in Section \ref{2}), and is such that its auxiliary process satisfies a Lyapunov/minorization condition.

Furthermore, this convergence result holds almost surely if a Lyapunov function of the process $(X_t)_{t \geq 0}$, denoted by $\psi$, is integrable with respect to the initial measure:
$$\frac{1}{t} \int_0^t f(X_s)ds \underset{t \to \infty}{\overset{\P_{0,\mu}-\text{almost surely}}{\xrightarrow{\hspace*{2cm}}}} \frac{1}{\gamma} \int_0^\gamma \beta_s(f) ds,~~~~\forall \mu \in \cM_1(\psi).$$
This will be more precisely stated and proved in Section \ref{2}.

\textcolor{black}{The main motivation of this paper is then to deal with \textit{quasi-stationarity with moving boundaries}, that is the study of asymptotic properties for the process $X$, conditioned not to reach some moving subset of the state space. In particular, such a study is motivated by models such as those presented in \cite{CCG2016}, studying Brownian particles absorbed by cells whose volume may vary over time.} 
 
 Quasi-stationarity with moving boundaries has been studied in particular in \cite{OCAFRAIN2017,ocafrain2018}, where a \textit{"conditional ergodic theorem"} (see further the definition of a \textit{quasi-ergodic distribution}) has been shown when the absorbing boundaries move periodically. In this paper, we show that a similar result holds  when the boundary is asymptotically periodic, assuming that the process satisfies a conditional Doeblin condition (see Assumption (A')). This will be dealt with in Section \ref{3}. 
 
The paper will be concluded by using these results in two examples: an ergodic theorem for an asymptotically periodic Ornstein-Uhlenbeck process, and the existence of a unique quasi-ergodic distribution for a Brownian motion confined between two symmetric asymptotically periodic functions.

\section{Ergodic theorem for asymptotically periodic time-inhomogeneous semigroup.}
\label{2}

\subsubsection*{Asymptotic periodicity: the definition.}


Let $(E,\cE)$ be a measurable space. 
Consider $\{(E_t, \cE_t)_{t \geq 0}, (P_{s,t})_{s \leq t}\}$ a Markovian time-inhomogeneous semigroup, giving a family of measurable subspaces of $(E, \cE)$, denoted by $(E_t, \cE_t)_{t \geq 0}$, and a family of linear operator $(P_{s,t})_{s \leq t}$, with $P_{s,t} : \cB(E_t) \to \cB(E_s)$, satisfying for any $r \leq s \leq t$,
$$P_{s,s} = \Id,~~~~~~~~P_{s,t}\1_{E_t} = \1_{E_s},~~~~~~~~P_{r,s}P_{s,t} = P_{r,t}.$$
In particular, associated to $\{(E_t, \cE_t)_{t \geq 0}, (P_{s,t})_{s \leq t}\}$ is a Markov process $(X_t)_{t \geq 0}$ and a family of probability measures $(\P_{s,x})_{s \geq 0, x \in E_s}$  such that, for any $s \leq t$, $x \in E_s$ and $A \in \cE_t$,
$$\P_{s,x}[X_t \in A] = P_{s,t}\1_{A}(x).$$
We denote $\P_{s,\mu} := \int_{E_s} \P_{s,x} \mu(dx)$ for any probability measure $\mu$ supported on $E_s$. We also denote by $\E_{s,x}$ and $\E_{s,\mu}$ the expectations associated to $\P_{s,x}$ and $\P_{s,\mu}$ respectively. 
Finally, the following notation will be used for $\mu \in \cM_1(E_s)$, $s \leq t$ and $f \in \cB(E_t)$,
$$\mu P_{s,t} f := \E_{s,\mu}[f(X_t)],~~~~~~\mu P_{s,t} := \P_{s,\mu}[X_t \in \cdot].$$ 
The periodicity of a time-inhomogeneous semigroup is defined as follows. We say a semigroup $\{(F_t, \cF_t)_{t \geq 0},(Q_{s,t})_{s \leq t}\}$ is \textit{$\gamma$-periodic} (for $\gamma > 0$), if, for any $s \leq t$,
$$(F_t, \cF_t) = (F_{t+k\gamma}, \cF_{t+k\gamma}),~~~~Q_{s,t} = Q_{s+k \gamma,t+k\gamma},~~~~\forall k \in \Z_+.$$ 
It is now possible to define an \textit{asymptotically periodic semigroup}.
\begin{definition}[Asymptotically periodic semigroups]
\label{asymptotic-periodicity}
A time-inhomogeneous semigroup $\{(E_t, \cE_t)_{t \geq 0},(P_{s,t})_{s \leq t}\}$ is said to be \textit{asymptotically periodic} if (for some $\gamma > 0$) there exists a $\gamma$-periodic semigroup $\{(F_t, \cF_t)_{t \geq 0},(Q_{s,t})_{s \leq t}\}$ and two families of functions $(\psi_s)_{s \geq 0}$ and $(\tilde{\psi}_s)_{s \geq 0}$  such that $\tilde{\psi}_{s+\gamma} = \tilde{\psi}_s$ for all $s \geq 0$, and for any $s \in [0, \gamma)$: \begin{enumerate} \item $\bigcup_{k=0}^\infty \bigcap_{l \geq k} E_{s+l\gamma} \cap F_s \ne \emptyset$; \item there exists $x_s \in \bigcup_{k=0}^\infty \bigcap_{l \geq k} E_{s+l\gamma} \cap F_s$ such that, for any $n \in \Z_+$, 
\begin{equation} \label{asympt-hom}  \|\delta_{x_s} P_{s+k \gamma, s+(k+n)\gamma}[\psi_{s+(k+n)\gamma} \times \cdot] - \delta_{x_s} Q_{s, s + n \gamma}[\tilde{\psi}_{s} \times \cdot]\|_{TV} \underset{k \to \infty}{\longrightarrow} 0.\end{equation}
\end{enumerate}
The semigroup $\{(F_t, \cF_t)_{t \geq 0},(Q_{s,t})_{s \leq t}\}$ is then called the \textit{auxiliary semigroup of $(P_{s,t})_{s \leq t}$}. 
\end{definition}

When $\psi_s = \tilde{\psi}_s = \1$ for all $s \geq 0$, we say that the semigroup $(P_{s,t})_{s \leq t}$ is \textit{asymptotically periodic in total variation.} By extension, we will say that the process $(X_t)_{t \geq 0}$ is asymptotically periodic (in total variation) if the associated semigroup $\{(E_t, \cE_t)_{t \geq 0},(P_{s,t})_{s \leq t}\}$ is asymptotically periodic (in total variation).

\textcolor{black}{In what follows, the functions $(\psi_s)_{s \geq 0}$ and $(\tilde{\psi}_s)_{s \in [0,\gamma)}$ will play the role of Lyapunov functions (that is to say satisfying Assumption \ref{lyap-def}  (ii) below) for the semigroups $(P_{s,t})_{s \leq t}$ and $(Q_{s,t})_{s \leq t}$ respectively. The introduction of these functions in the definition of asymptotically periodic semigroups will allow us to establish an ergodic theorem for processes satisfying the Lyapunov/minorization conditions written below.}
 
 \subsubsection*{Lyapunov/minorization conditions.}
 
 The main assumption of Theorem \ref{thm}, which will be provided later, will be that  the asymptotically periodic Markov process satisfies the following assumption.
 \begin{assumption}
\label{lyap-def}
There exist $t_1 \geq 0$, $n_0 \in \N$, $c > 0$, $\theta \in (0,1)$, a family of \textcolor{black}{measurable sets $(K_t)_{t \geq 0}$ such that $K_t \subset E_t$ for all $t \geq 0$}, a family of probability measures $\left(\nu_{s}\right)_{s \geq 0}$ on $(K_{s})_{s \geq 0}$,  and a family of functions $(\psi_s)_{s \geq 0}$, all lower-bounded by $1$, such that:
 \begin{enumerate}[(i)]
 \item For any $s \geq 0$, $x \in K_s$ and $n \geq n_0$,
     $$\delta_x P_{s,s+n t_1} \geq c \nu_{s+nt_1}.$$
     \item For any $s \geq 0$, $$P_{s,s+t_1} \psi_{s+t_1} \leq \theta \psi_s + C \1_{K_s}.$$
     \item For any $s \geq 0$ and $t \in [0,t_1)$,
     $$P_{s,s+t} \psi_{s+t} \leq C \psi_s.$$
 \end{enumerate}
 \end{assumption}
\textcolor{black}{When a semigroup $(P_{s,t})_{s \leq t}$ satisfies Assumption \ref{lyap-def} as stated above, we will say that the family of functions $(\psi_s)_{s \geq 0}$ are \textit{Lyapunov functions} for the semigroup $(P_{s,t})_{s \leq t}$.}
 In particular, under $(ii)$ and $(iii)$, it is easy to prove that for any $s \leq t$,
 \begin{equation}
     \label{maj}
     P_{s,t} \psi_t \leq C \left(1 + \frac{C}{1-\theta}\right) \psi_s.
 \end{equation}
 We remark in particular that Assumption \ref{lyap-def} implies an \textit{exponential weak ergodicity in $\psi_t$-distance}, that is we have the existence of two constants $C' > 0$ and $\kappa > 0$ such that, for all $s \leq t$  and for all probability measures $\mu_1, \mu_2 \in \cM_1(E_s)$,
 \begin{equation}
     \label{mixing}
     \| \mu_1 P_{s,t} - \mu_2 P_{s,t} \|_{\psi_t} \leq C' [\mu_1(\psi_s) + \mu_2(\psi_s)] e^{-\kappa (t-s)},
 \end{equation}
 whereby, for a given function $\psi$, $\| \mu - \nu \|_\psi$ is the $\psi$-distance defined to be 
 $$\| \mu - \nu \|_{\psi} := \sup_{|f| \leq \psi} \left| \mu(f) - \nu(f) \right|,~~~~\forall \mu, \nu \in \cM_1(\psi).$$
 In particular, when $\psi = \1$ for all $t \geq 0$, the $\psi$-distance is the total variation distance. If we have weak ergodicity \eqref{mixing} in the time-homogeneous setting (see in particular \cite{hairer2011yet}), the proof of \cite[Theorem 1.3.]{hairer2011yet} can be adapted to a general time-inhomogeneous framework (see for example \cite[Subsection 9.5]{CV2017c} 
). 

 
\subsubsection*{The main theorem and proof.}
 
The main result of this paper is the following.
\begin{theorem}
\label{thm}
Let $\{ (E_t, \cE_t)_{t \geq 0},(P_{s,t})_{s \leq t}, (X_t)_{t \geq 0}, (\P_{s,x})_{s \geq 0, x \in E_s}\}$ be an asymptotically $\gamma$-periodic time-inhomogeneous Markov process, with $\gamma > 0$, and denote by $\{(F_t, \cF_t)_{t \geq 0},(Q_{s,t})_{s \leq t}\}$ its periodic auxiliary semigroup. Also, denote by $(\psi_s)_{s \geq 0}$ and $(\tilde{\psi}_s)_{s \geq 0}$ the two families of functions as defined in Definition \ref{asymptotic-periodicity}.
Assume moreover that:
\begin{enumerate}
    \item The semigroups $(P_{s,t})_{s \leq t}$ and $(Q_{s,t})_{s \leq t}$ satisfy Assumption \ref{lyap-def}, with $(\psi_s)_{s \geq 0}$ and $(\tilde{\psi}_s)_{s \geq 0}$ as Lyapunov functions respectively.
    \item For any $s \in [0,\gamma)$, $(\psi_{s+n \gamma})_{n \in \Z_+}$ converges pointwisely to $\tilde{\psi}_s$. 
\end{enumerate}
Then, for any $\mu \in \cM_1(E_0)$ such that $\mu(\psi_0) < + \infty$, \begin{equation} \label{tv} \left\|\frac{1}{t} \int_0^t \mu P_{0,s}[\psi_s \times \cdot] ds - \frac{1}{\gamma} \int_0^\gamma \beta_{\gamma} Q_{0,s}[\tilde{\psi}_s \times \cdot] ds \right\|_{TV} \underset{t \to \infty}{{\longrightarrow}} 0, \end{equation}
whereby $\beta_\gamma \in \cM_1(F_0)$ is the unique invariant probability measure of the skeleton semigroup $(Q_{0,n \gamma})_{n \in \Z_+}$ satisfying $\beta_\gamma(\tilde{\psi}_0) < + \infty$. 
Moreover, for any  $f \in \cB(E)$ we have:
\begin{enumerate}
\item For any $\mu \in \cM_1(E_0)$,
\begin{equation}
\label{l2}
    \E_{0,\mu}\left[\left|\frac{1}{t} \int_0^t f(X_s)ds -  \frac{1}{\gamma} \int_0^\gamma \beta_\gamma Q_{0,s}f ds\right|^2\right] \underset{t \to \infty}{\longrightarrow} 0.
\end{equation}
\item If moreover $\mu(\psi_0) < + \infty$, then
\begin{equation}\label{as}\frac{1}{t} \int_0^t f(X_s) ds  \underset{t \to \infty}{\longrightarrow} \frac{1}{\gamma} \int_0^\gamma \beta_\gamma Q_{0,s}f ds,~~~~\P_{0,\mu}-\text{almost surely.}\end{equation}
\end{enumerate}
\end{theorem}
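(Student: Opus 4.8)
The plan is to reduce the whole statement to a single ``per-phase'' convergence and then assemble the three conclusions from it. \textbf{Setup.} First I would record what Assumption~\ref{lyap-def} and $\gamma$-periodicity give for $(Q_{s,t})_{s\le t}$: applying the weak ergodicity \eqref{mixing} for $Q$ between times $0$ and $n\gamma$ and using $\tilde\psi_{n\gamma}=\tilde\psi_0$ yields a geometric contraction of the time-homogeneous skeleton $(Q_{0,n\gamma})_{n\in\Z_+}$ in $\tilde\psi_0$-distance, whence the existence of a unique $\beta_\gamma\in\cM_1(F_0)$ with $\beta_\gamma(\tilde\psi_0)<\infty$ invariant for $(Q_{0,n\gamma})_{n\in\Z_+}$, with $\|\mu Q_{0,n\gamma}-\beta_\gamma\|_{\tilde\psi_0}\to0$ geometrically on $\cM_1(\tilde\psi_0)$. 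For $s\in[0,\gamma)$ set $\beta_s:=\beta_\gamma Q_{0,s}$; using $\beta_\gamma Q_{0,\gamma}=\beta_\gamma$ and the periodicity $Q_{\gamma,s+\gamma}=Q_{0,s}$ one checks $\beta_s Q_{s,s+n\gamma}=\beta_s$ for every $n$, so $\beta_s[\tilde\psi_s\times\cdot]=\beta_\gamma Q_{0,s}[\tilde\psi_s\times\cdot]$ is exactly the integrand on the right of \eqref{tv}. I will also use repeatedly the identity $\|\mu_1 P_{0,t}[\psi_t\times\cdot]-\mu_2 P_{0,t}[\psi_t\times\cdot]\|_{TV}=\|\mu_1 P_{0,t}-\mu_2 P_{0,t}\|_{\psi_t}$ (and its $Q,\tilde\psi$ analogue), so that \eqref{mixing} and \eqref{asympt-hom} speak directly about the weighted total-variation distances in play.

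\textbf{The key step.} The main point is to prove that for every $s\in[0,\gamma)$ and every $\mu$ with $\mu(\psi_0)<\infty$,
\[
\mu P_{0,s+k\gamma}[\psi_{s+k\gamma}\times\cdot]\xrightarrow[k\to\infty]{TV}\beta_\gamma Q_{0,s}[\tilde\psi_s\times\cdot].
\]
Fix $n\in\Z_+$, write $k=m+n$, and split $\mu P_{0,s+k\gamma}=(\mu P_{0,s+m\gamma})\,P_{s+m\gamma,\,s+(m+n)\gamma}$. A triangle inequality through $\delta_{x_s}P_{s+m\gamma,s+(m+n)\gamma}[\psi_{s+(m+n)\gamma}\times\cdot]$ and then $\delta_{x_s}Q_{s,s+n\gamma}[\tilde\psi_s\times\cdot]$, where $x_s$ is the point from Definition~\ref{asymptotic-periodicity} (which lies in $E_{s+m\gamma}$ once $m$ is large), produces three terms: (A) a mixing term bounded via \eqref{mixing} by $C'\big[(\mu P_{0,s+m\gamma})(\psi_{s+m\gamma})+\psi_{s+m\gamma}(x_s)\big]e^{-\kappa n\gamma}$, which by \eqref{maj} and the second hypothesis of the theorem (giving $\sup_m\psi_{s+m\gamma}(x_s)<\infty$) is $\le \mathrm{const}(s,\mu)\,e^{-\kappa n\gamma}$ uniformly in $m$; (B) an asymptotic-periodicity term which, for fixed $n$, tends to $0$ as $m\to\infty$ by \eqref{asympt-hom}; and (C) the term $\|\delta_{x_s}Q_{s,s+n\gamma}[\tilde\psi_s\times\cdot]-\beta_s[\tilde\psi_s\times\cdot]\|_{TV}$, which, since $\beta_s$ is fixed by $Q_{s,s+n\gamma}$, equals $\|\delta_{x_s}Q_{s,s+n\gamma}-\beta_s Q_{s,s+n\gamma}\|_{\tilde\psi_{s+n\gamma}}\le C'[\tilde\psi_s(x_s)+\beta_s(\tilde\psi_s)]e^{-\kappa n\gamma}$ by \eqref{mixing} for $Q$. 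Choosing $n$ large first (to control (A) and (C)), then $m$ large (to control (B)), yields the claim. I expect this ordering of limits to be the main obstacle: \eqref{asympt-hom} carries no rate in $m$, so $n$ must be frozen before letting $m\to\infty$, which forces the factors $e^{-\kappa n\gamma}$ in (A) and (C) to be made small \emph{uniformly in} $m$ — precisely what hypothesis~2 and the a priori bound \eqref{maj} provide.

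\textbf{From the key step to \eqref{tv} and the means.} Given the key step, \eqref{tv} follows by writing $t=N\gamma+r$ with $r\in[0,\gamma)$, substituting $u=s+k\gamma$ on each period so that the $k$-th block is $\int_0^\gamma\mu P_{0,s+k\gamma}[\psi_{s+k\gamma}\times\cdot]\,ds$ (a measure of mass $\le\gamma\,\mathrm{const}\,\mu(\psi_0)$ by \eqref{maj}), using dominated convergence in $s$ to get $\int_0^\gamma\mu P_{0,s+k\gamma}[\psi_{s+k\gamma}\times\cdot]\,ds\to\int_0^\gamma\beta_\gamma Q_{0,s}[\tilde\psi_s\times\cdot]\,ds$ in $TV$, and then taking the Cesàro average over $k<N$ (with $N/t\to1/\gamma$ and the leftover block over $[N\gamma,t]$ contributing $O(1/t)$). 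For the deterministic means needed next, apply the key step with $\mu=\delta_x$ (valid since $\psi_0(x)<\infty$) and write $\delta_x P_{0,s+k\gamma}f=\delta_x P_{0,s+k\gamma}[\psi_{s+k\gamma}\times(f/\psi_{s+k\gamma})]$; since $|f/\psi_{s+k\gamma}|\le\|f\|_\infty$ and $f/\psi_{s+k\gamma}\to f/\tilde\psi_s$ pointwise by hypothesis~2, dominated convergence against the finite measure $\beta_\gamma Q_{0,s}[\tilde\psi_s\times\cdot]$ gives $\delta_x P_{0,s+k\gamma}f\to\beta_\gamma Q_{0,s}f$ for every $f\in\cB(E)$. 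Averaging over a period and then over $x\sim\mu$ (integrands bounded by $\|f\|_\infty$) yields $\frac1t\int_0^t\mu P_{0,s}f\,ds\to\ell:=\frac1\gamma\int_0^\gamma\beta_\gamma Q_{0,s}f\,ds$ for \emph{every} $\mu\in\cM_1(E_0)$.

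\textbf{$\L^2$ and almost sure convergence.} For \eqref{l2} it then suffices to show $\mathrm{Var}_{0,\mu}\big(\frac1t\int_0^t f(X_s)\,ds\big)\to0$. By the law of total variance this is $\int_{E_0}\mathrm{Var}_{0,x}(\cdots)\,\mu(dx)+\mathrm{Var}_{x\sim\mu}\big(\frac1t\int_0^t \delta_x P_{0,s}f\,ds\big)$; the second summand $\to0$ by dominated convergence since $\frac1t\int_0^t \delta_x P_{0,s}f\,ds\to\ell$ and is bounded by $\|f\|_\infty$. For the first, the Markov property gives $\mathrm{Cov}_{0,x}(f(X_u),f(X_v))=(\delta_x P_{0,v})\big[f\cdot\big(P_{v,u}f-(\delta_x P_{0,u})f\big)\big]$, which \eqref{mixing} bounds by $2C'\|f\|_\infty^2 e^{-\kappa(u-v)}(\delta_x P_{0,v})(\psi_v)\le 2C'\|f\|_\infty^2\,\mathrm{const}\,\psi_0(x)\,e^{-\kappa(u-v)}$ via \eqref{maj}; integrating over $0\le v\le u\le t$ gives $\mathrm{Var}_{0,x}\big(\frac1t\int_0^t f(X_s)\,ds\big)\le\mathrm{const}(f)\,\psi_0(x)/t\to0$, and since it is also $\le\|f\|_\infty^2$, dominated convergence in $x$ finishes \eqref{l2}. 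For \eqref{as}, when $\mu(\psi_0)<\infty$ the same covariance bound yields $\E_{0,\mu}\big[\big(\int_0^t(f(X_s)-\mu P_{0,s}f)\,ds\big)^2\big]\le C_1(\mu,f)\,t$, so along $t=j^2$ Markov's inequality and Borel--Cantelli give $\frac1{j^2}\int_0^{j^2}(f(X_s)-\mu P_{0,s}f)\,ds\to0$ a.s.; the elementary $2\|f\|_\infty$-Lipschitz bound on $t\mapsto\int_0^t(f(X_s)-\mu P_{0,s}f)\,ds$ fills the gaps $j^2\le t<(j+1)^2$, and adding the deterministic term $\frac1t\int_0^t\mu P_{0,s}f\,ds\to\ell$ from the previous step gives \eqref{as}. (Joint measurability of $(s,\omega)\mapsto f(X_s(\omega))$ is assumed throughout, so that Fubini applies in these variance computations.)
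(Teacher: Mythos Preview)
Your proposal is correct and follows essentially the same architecture as the paper's proof: a per-phase convergence $\mu P_{0,s+k\gamma}[\psi_{s+k\gamma}\times\cdot]\to\beta_\gamma Q_{0,s}[\tilde\psi_s\times\cdot]$ obtained via the same three-term triangle inequality (mixing of $P$, asymptotic periodicity \eqref{asympt-hom}, ergodicity of $Q$), followed by dominated convergence over $s\in[0,\gamma)$ and a Ces\`aro average for \eqref{tv}, the $f/\psi_{s+k\gamma}$ trick for the unweighted means, the Markov-property covariance bound $\mathrm{Cov}_{0,x}(f(X_u),f(X_v))\le \mathrm{const}\,\psi_0(x)e^{-\kappa(u-v)}$ for \eqref{l2}, and the $t=j^2$ subsequence plus Borel--Cantelli for \eqref{as}. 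The only cosmetic differences are that the paper first verifies the Hairer--Mattingly drift/minorization pair for $(Q_{0,n\gamma})_{n\in\Z_+}$ rather than reading off the fixed point directly from \eqref{mixing}, and that it interpolates between the times $j^2$ by monotonicity for nonnegative $f$ (then splits $f=f_+-f_-$) instead of your $2\|f\|_\infty$-Lipschitz argument; both variants work.
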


\begin{remark}
When Assumption \ref{lyap-def} hold for $K_s = E_s$ for any $s$, then the condition $(i)$ in Assumption \ref{lyap-def} implies \textit{Doeblin condition}.
 \end{remark}
 \begin{doeblin}
 There exists $t_0 \geq 0$, $c > 0$ and a family of probability measure $(\nu_t)_{t \geq 0}$ on $(E_t)_{t \geq 0}$ such that, for any $s \geq 0$ and $x \in E_s$,
 \begin{equation} \label{doeblin}\delta_x P_{s,s+t_0} \geq c \nu_{s+t_0}.\end{equation}
\end{doeblin}
In fact, if we assume that Assumption \ref{lyap-def} (i) holds for $K_s = E_s$, Doeblin condition holds by setting $t_0 := n_0 t_1$. Conversely, the Doeblin condition implies the conditions (i)-(ii)-(iii) with $K_s = E_s$ and $\psi_s = \1_{E_s}$ for all $s \geq 0$, so that these conditions are equivalent. In fact, (ii) and (iii) straightforwardly hold true for $(K_s)_{s \geq 0} = (E_s)_{s \geq 0}$, $(\psi_s)_{s \geq 0} = (\1_{E_s})_{s \geq 0}$, $C = 1$, any $\theta \in (0,1)$ and any $t_1 \geq 0$. Setting $t_1 = t_0$ and $n_0 = 1$, the Doeblin condition entails that, for any $s \in [0,t_1)$,
$$\delta_x P_{s,s+t_1} \geq c \nu_{s+t_1},~~~~\forall x \in E_s.$$
Integrating this inequality over $\mu \in \cM_1(E_s)$, one obtains
$$\mu P_{s,s+t_1} \geq c \nu_{s+t_1},~~~~\forall s \in [0,t_1), \forall \mu \in \cM_1(E_s).$$
Then, by the Markov property, for all $s \in [0,t_1)$, $x \in E_s$ and $n \in \N$, we have
$$\delta_x P_{s,s+nt_1} = (\delta_x P_{s,s+(n-1)t_1})P_{s + (n-1)t_1, s + nt_1} \geq c \nu_{s+nt_1},$$
which is (i).  

 Theorem \ref{thm} then entails the following corollary.
 \begin{corollary}
 \label{corollary}
 Let $(X_t)_{t \geq 0}$ be asymptotically $\gamma$-periodic in total variation distance. If $(X_t)_{t \geq 0}$ and its auxiliary semigroup satisfy a Doeblin condition, then the convergence \eqref{l2} is improved to
 \begin{equation*}
    \sup_{\mu \in \cM_1(E_0)} \sup_{f \in \cB_1(E)} \E_{0,\mu}\left[\left|\frac{1}{t} \int_0^t f(X_s)ds -  \frac{1}{\gamma} \int_0^\gamma \beta_\gamma Q_{0,s}f ds\right|^2\right] \underset{t \to \infty}{\longrightarrow} 0.
\end{equation*}
Moreover, the almost sure convergence \eqref{as} holds for any initial measure $\mu$.
 \end{corollary}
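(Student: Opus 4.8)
The plan is to apply Theorem \ref{thm} and then argue that, in the Doeblin setting, all the error estimates appearing in its proof become uniform in the initial measure $\mu$ and in $f \in \cB_1(E)$. First I would recall, as explained in the remark preceding the corollary, that under a Doeblin condition one may take $K_s = E_s$, $\psi_s = \tilde\psi_s = \1$ for all $s$, and $C = 1$, $t_1 = t_0$, $n_0 = 1$, with any $\theta \in (0,1)$; hence Assumption \ref{lyap-def} holds for both $(P_{s,t})$ and its auxiliary semigroup $(Q_{s,t})$, and asymptotic periodicity in total variation is exactly asymptotic periodicity with these trivial Lyapunov functions. Since $\mu(\psi_0) = 1 < +\infty$ for \emph{every} $\mu \in \cM_1(E_0)$, Theorem \ref{thm} already yields both \eqref{l2} for each fixed $\mu$ and the almost sure convergence \eqref{as} for every initial measure $\mu$; the only new content is the \emph{uniformity} of the $\L^2$ bound over $\mu$ and over $f \in \cB_1(E)$.

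For that uniformity I would revisit the decomposition underlying \eqref{l2}. Writing $\frac1t\int_0^t f(X_s)\,ds - \frac1\gamma\int_0^\gamma \beta_\gamma Q_{0,s}f\,ds$ as the sum of a ``centred'' fluctuation term $\frac1t\int_0^t\big(f(X_s) - \mu P_{0,s}f\big)ds$ and a ``bias'' term $\frac1t\int_0^t \mu P_{0,s}f\,ds - \frac1\gamma\int_0^\gamma \beta_\gamma Q_{0,s}f\,ds$, I would bound each in $\L^2(\P_{0,\mu})$. The bias term is deterministic and bounded in absolute value by $\big\|\frac1t\int_0^t \mu P_{0,s}\,ds - \frac1\gamma\int_0^\gamma \beta_\gamma Q_{0,s}\,ds\big\|_{TV}$, which by \eqref{tv} (now with $\psi_s\equiv\1$) goes to $0$; the key point is that the rate in \eqref{tv} comes only from the exponential weak ergodicity \eqref{mixing} and from the convergence \eqref{asympt-hom} of $\delta_{x_s}P_{s+k\gamma,\cdot}$ to $\delta_{x_s}Q_{s,\cdot}$ — under Doeblin, \eqref{mixing} reads $\|\mu_1 P_{s,t}-\mu_2 P_{s,t}\|_{TV}\le 2C'e^{-\kappa(t-s)}$ with constants \emph{independent of $\mu_1,\mu_2$}, and since $\|\mu P_{0,s}-\delta_{x_0}P_{0,s}\|_{TV}\le 2C'e^{-\kappa s}$ the whole bound becomes independent of $\mu$, and being a total-variation distance it is automatically a supremum over $f\in\cB_1(E)$. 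For the fluctuation term one expands the square as a double time integral of covariances $\E_{0,\mu}[(f(X_s)-\mu P_{0,s}f)(f(X_u)-\mu P_{0,u}f)]$ for $s\le u$; conditioning at time $s$ and using the Markov property, each such covariance is controlled by $\|\delta_{X_s}P_{s,u} - \mu P_{0,s}P_{s,u}\|_{TV}\,\|f\|_\infty^2 \le 2C' e^{-\kappa(u-s)}\|f\|_\infty^2$, again by \eqref{mixing} in its Doeblin form, with constants free of $\mu$; integrating over the triangle $\{0\le s\le u\le t\}$ gives an $O(1/t)$ bound on the $\L^2$ norm, uniform in $\mu\in\cM_1(E_0)$ and in $f\in\cB_1(E)$. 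Combining the two estimates and taking the supremum over $\mu$ and $f$ yields the displayed uniform convergence.

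Finally, the almost sure convergence \eqref{as} for arbitrary $\mu$ is immediate: $\psi_0\equiv\1$ forces $\mu(\psi_0)=1<+\infty$ for every $\mu\in\cM_1(E_0)$, so the hypothesis of part (2) of Theorem \ref{thm} is vacuously satisfied and the conclusion applies verbatim. The main obstacle is the bookkeeping in the second step: one must check that every constant entering the proof of \eqref{l2} and \eqref{tv} in Theorem \ref{thm} — in particular the constants $C'$ and $\kappa$ in \eqref{mixing} and whatever rate governs \eqref{asympt-hom} — can be chosen independently of the initial distribution, which in the Doeblin case it can, precisely because the Lyapunov functions are constant and the minorization in \eqref{doeblin} holds at every point $x\in E_s$ rather than only on a restricted set $K_s$. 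Everything else is a re-reading of the proof of Theorem \ref{thm} with $\psi\equiv\1$.
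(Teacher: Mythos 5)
Your argument is correct and follows essentially the same route as the paper: the paper's proof of the corollary simply notes that when $\psi_s \equiv \1$ the variance estimate \eqref{variance} becomes independent of $f$ and the initial point, and that the convergence \eqref{jjj} (and hence the bias estimate \eqref{final}) holds uniformly in the initial measure thanks to \eqref{mixing-prop}. Your bias/fluctuation decomposition and the observation that $\mu(\psi_0)=1<+\infty$ for every $\mu$ (so the almost sure part of Theorem~\ref{thm} applies to all initial measures) is exactly that remark spelled out in full.
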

 
 \begin{remark}
We also note that, if the convergence \eqref{asympt-hom} holds for all $x \in \bigcup_{k=0}^\infty \bigcap_{l \geq k} E_{s+l \gamma} \cap F_s$, then this implies \eqref{asympt-hom} and therefore the pointwise convergence of $(\psi_{s+n \gamma})_{n \in \Z_+}$ to $\tilde{\psi}_s$ (by taking $n = 0$ in \eqref{asympt-hom}). 
 \end{remark}

\begin{proof}[Proof of Theorem \ref{thm}] The proof is divided into five steps. \medskip

\textit{First step.} Since the auxiliary semigroup $(Q_{s,t})_{s \leq t}$ satisfies Assumption \ref{lyap-def} with $(\tilde{\psi}_s)_{s \geq 0}$ as Lyapunov functions, the time-homogeneous semigroup $(Q_{0,n\gamma})_{n \in \Z_+}$ satisfies assumptions 1 and 2 of \cite{hairer2011yet}, which we now recall (using our notation).
\begin{assumption}[Assumption 1, \cite{hairer2011yet}] \label{assumption-1} There exists $V : F_0 \to [0,+\infty)$, $n_1 \in \N$ and constants $K \geq 0$ and $\kappa \in (0,1)$ such that
$$Q_{0,n_1\gamma} V \leq \kappa V + K.$$
\end{assumption}
\begin{assumption}[Assumption 2, \cite{hairer2011yet}] \label{assumption-2}
There exists a constant $\alpha \in (0,1)$ and a probability measure $\nu$ such that 
$$\inf_{x \in \cC_R} \delta_x Q_{0,n_1\gamma} \geq \alpha \nu(\cdot),$$
with $\cC_R := \{x \in F_0 : V(x) \leq R \}$ for some $R > 2 K/(1-\kappa)$, whereby $n_1$, $K$ and $\kappa$ are the constants from Assumption \ref{assumption-1}.
\end{assumption}
In fact, since $(Q_{s,t})_{s \leq t}$ satisfies (ii) and (iii) of Assumption \ref{lyap-def}, there exists $C > 0$, $\theta \in (0,1)$, $t_1 \geq 0$ and $(K_s)_{s \geq 0}$ such that 
\begin{equation}\label{(ii)-Q}Q_{s,s+t_1} \tilde{\psi}_{s+t_1} \leq \theta \tilde{\psi}_s + C \1_{K_s},~~~~\forall s \geq 0,\end{equation}
and 
$$Q_{s,s+t}\tilde{\psi}_{s+t} \leq C \tilde{\psi}_s,~~~~\forall s \geq 0, \forall t \in [0,t_1).$$
We let $n_2 \in \N$ be such that $\theta^{n_2} C (1 + \frac{C}{1-\theta}) < 1$. By \eqref{(ii)-Q} and recalling that $\tilde{\psi}_t = \tilde{\psi}_{t+\gamma}$ for all $t \geq 0$, one has for any $s \geq 0$ and $n \in \N$,
\begin{equation}\label{intermediaire}Q_{s,s+nt_1}\tilde{\psi}_{s+nt_1} \leq \theta^n \tilde{\psi}_s + \frac{C}{1 - \theta}.\end{equation}
Thus, for all $n_1 \geq \lceil \frac{n_2t_1}{\gamma} \rceil$, 
$$Q_{0,n_1\gamma}\tilde{\psi}_0 = Q_{0,n_1 \gamma - n_2t_1} Q_{n_1 \gamma - n_2t_1, n_1 \gamma} \tilde{\psi}_{n_1 \gamma} \leq \theta^{n_2} Q_{0,n_1 \gamma -n_2t_1} \tilde{\psi}_{n_1\gamma-n_2t_1} + \frac{C}{1-\theta} \leq \theta^{n_2}C(1 + \frac{C}{1-\theta}) \tilde{\psi}_0 + \frac{C}{1 - \theta},$$
where we successively used the property of semigroup of $(Q_{s,t})_{s \leq t}$, \eqref{intermediaire} and \eqref{maj} applied to $(Q_{s,t})_{s \leq t}$. Hence one has Assumption \ref{assumption-1} by setting $V = \tilde{\psi}_0$, $\kappa := \theta^{n_2}C(1 + \frac{C}{1-\theta})$ and $K := \frac{C}{1 - \theta}$. 

We now prove Assumption \ref{assumption-2}. To this end, we introduce a Markov process $(Y_t)_{t \geq 0}$ and a family of probability measures $(\hat{\P}_{s,x})_{s \geq 0, x \in F_s}$ such that
$$\hat{\P}_{s,x}(Y_t \in A) = Q_{s,t}\1_A(x),~~~~\forall s \leq t, x \in F_s, A \in \cF_t.$$
In what follows, for all $s \geq 0$ and $x \in F_s$, we will use the notation $\hat{\E}_{s,x}$ for the expectation associated to $\hat{\P}_{s,x}$.
Moreover, we define $$T_K := \inf\{n \in \Z_+ : Y_{n t_1} \in K_{nt_1}\}.$$
Then, using \eqref{(ii)-Q} recursively, for all $k \in \N$, $R > 0$ and $x \in \cC_R$ (recalling that $\cC_R$ is defined in the statement of Assumption \ref{assumption-2}) we have
\begin{align*}\hat{\E}_{0,x}[\tilde{\psi}_{kt_1}(Y_{kt_1})\1_{T_K > k}] &= \hat{\E}_{0,x}[\1_{T_K > k-1} \hat{\E}_{(k-1)t_1,Y_{(k-1)t_1}}(\tilde{\psi}_{kt_1}(Y_{kt_1})\1_{T_K > k})] \\&\leq \theta \hat{\E}_{0,x}[\tilde{\psi}_{(k-1)t_1}(Y_{(k-1)t_1})\1_{T_K > k-1}] \leq \theta^{k} \tilde{\psi}_0(x) \leq R \theta^k.\end{align*}
Since $\tilde{\psi}_{kt_1} \geq 1$ for all $k \in \Z_+$, then for all $x \in \cC_R$, for all $k \in \Z_+$,
$$\hat{\P}_{0,x}(T_k > k) \leq R \theta^k.$$
In particular, there exists $k_0 \geq n_0$ such that, for all $k \geq k_0 - n_0$, 
$$\hat{\P}_{0,x}(T_K > k) \leq \frac{1}{2}.$$
Hence, for all $x \in \cC_R$,
\begin{align*}
\delta_x Q_{0,k_0t_1} = \hat{\P}_{0,x}(Y_{k_0t_1} \in \cdot) \geq &\sum_{i=0}^{k_0-n_0} \hat{\E}_{0,x}(\1_{T_k = i} \hat{\P}_{it_1,X_{it_1}}(Y_{k_0t_1} \in \cdot)) \\
&\geq c \sum_{i=0}^{k_0-n_0} \hat{\E}_{0,x}(\1_{T_K = i}) \times \nu_{k_0t_1} \\
&= c \hat{\P}_{0,x}(T_K \leq k_0 - n_0) \nu_{k_0t_1} \\
&\geq \frac{c}{2} \nu_{k_0t_1}.
\end{align*}
Hence, for all $n_1 \geq \lceil \frac{k_0t_1}{\gamma} \rceil$, for all $x \in \cC_R$,
$$\delta_x Q_{0,k_0t_1}Q_{k_0t_1,n_1\gamma} \geq \frac{c}{2} \nu_{k_0t_1} Q_{k_0t_1, n_1 \gamma}.$$
Thus, Assumption \ref{assumption-2} is satisfied taking $n_1 := \lceil \frac{n_2 t_1}{\gamma}\rceil \lor \lceil \frac{k_0t_1}{\gamma} \rceil$, $\alpha := \frac{c}{2}$ and $\nu(\cdot) := \nu_{k_0t_1} Q_{k_0t_1,n_1\gamma}$. 
 
Then, by \cite[Theorem 1.2]{hairer2011yet}, Assumptions \ref{assumption-1} and \ref{assumption-2} imply that $Q_{0,n_1 \gamma}$ admits a unique invariant probability measure $\beta_\gamma$. Furthermore, there exists constants $C > 0$ and $\delta \in (0,1)$ such that, for all $\mu \in \cM_1(F_0)$,
\begin{equation}\label{convergence-exponentielle}\| \mu Q_{0,nn_1\gamma} - \beta_\gamma\|_{\tilde{\psi}_0} \leq C \mu(\tilde{\psi}_0) \delta^n.\end{equation}
Since $\beta_\gamma$ is the unique invariant probability measure of $Q_{0,n_1 \gamma}$ and noting that $\beta_\gamma Q_{0,\gamma}$ is invariant for $Q_{0,n_1 \gamma}$, we deduce that $\beta_\gamma$ is the unique invariant probability measure for $Q_{0,\gamma}$ and, by \eqref{convergence-exponentielle}, for all $\mu$ such that $\mu(\tilde{\psi}_0) < + \infty$, 
$$\|\mu Q_{0,n\gamma} - \beta_\gamma\|_{\tilde{\psi}_0} \underset{n \to \infty}{\longrightarrow} 0.$$
Now, for any $s \geq 0$, note that $\delta_x Q_{s,\lceil \frac{s}{\gamma} \rceil \gamma} \tilde{\psi}_0 < + \infty$ for all $x \in F_s$ (this is a consequence of \eqref{maj} applied to the semigroup $(Q_{s,t})_{s \leq t}$), and therefore, taking $\mu = \delta_x Q_{s,\lceil \frac{s}{\gamma} \rceil \gamma}$ in the above convergence, 
$$\|\delta_x Q_{s,n\gamma} - \beta_\gamma\|_{\tilde{\psi}_0} \underset{n \to \infty}{\longrightarrow} 0$$
 for all $x \in F_s$. Hence, since $Q_{n\gamma, n \gamma +s}\tilde{\psi}_s \leq C (1 + \frac{C}{1-\theta}) \tilde{\psi}_{n \gamma}$ by \eqref{maj}, we conclude from the above convergence that 
\begin{equation} \label{decay}\| \delta_{x} Q_{s,s+n\gamma} - \beta_\gamma Q_{0,s} \|_{\tilde{\psi}_s} \leq C (1 + \frac{C}{1-\theta}) \|\delta_x Q_{s,n\gamma} - \beta_\gamma\|_{\tilde{\psi}_0} \underset{n \to \infty}{\longrightarrow} 0.\end{equation}
Moreover, $\beta_\gamma(\tilde{\psi}_0) < + \infty$.

\textit{Second step.}  The first part of this step (until the equality \eqref{bansaye}) is inspired by the proof of \cite[Theorem 3.11]{BCG2017}.

We fix $s \in [0,\gamma]$. Without loss of generality, we assume that $\bigcap_{l \geq 0} E_{s+l \gamma} \cap F_s \ne \emptyset$. Then, by Definition \ref{asymptotic-periodicity}, there exists $x_s \in \bigcap_{l \geq 0} E_{s+l \gamma} \cap F_s$ such that for any $n \geq 0$,
\begin{equation*}  \|\delta_{x_s} P_{s+k \gamma, s+(k+n)\gamma}[\psi_{s+(k+n)\gamma} \times \cdot] - \delta_{x_s} Q_{s, s + n \gamma}[\tilde{\psi}_{s} \times \cdot]\|_{TV} \underset{k \to \infty}{\longrightarrow} 0,\end{equation*}
which implies by \eqref{decay} that
\begin{equation}
    \label{asymptotic-homogeneization}
    \lim_{n \to \infty} \lim_{k \to \infty} \|\delta_{x_s} P_{s+k \gamma, s+(k+n)\gamma}[\psi_{s+(k+n)\gamma} \times \cdot] - \beta_\gamma Q_{0,s}[\tilde{\psi}_s \times \cdot] \|_{TV} = 0.
\end{equation}
Then, by the Markov property, \eqref{mixing}, and \eqref{maj}, one obtains that, for any $k,n \in \N$ and $x \in \bigcap_{l \geq 0} E_{s+l \gamma}$,
\begin{align}
    \|\delta_{x} P_{s,s+(k+n)\gamma} - \delta_{x} P_{s+k\gamma, s+(k+n)\gamma} \|_{\psi_{s+(k+n)\gamma}} &= \|\left(\delta_{x} P_{s,s+k \gamma} \right) P_{s+k\gamma,s+(k+n)\gamma} - \delta_{x} P_{s+k\gamma, s+(k+n)\gamma} \|_{\psi_{s+(k+n)\gamma}} \notag \\ &\leq C' [P_{s,s+k \gamma} \psi_{s+k \gamma}(x) + \psi_{s+k \gamma}(x)] e^{- \kappa \gamma n}  \leq  C''[\psi_s(x) +  \psi_{s+k \gamma}(x)] e^{-\kappa \gamma n}, \label{premiere-etape}
\end{align}
 whereby $C'' := C' \left(C \left(1 + \frac{C}{1-\theta}\right) \lor 1\right)$. Then, for any $k,n \in \N$,
    \begin{multline}
        \label{pour-la-remark}
         \| \delta_{x_s} P_{s,s+(k+n)\gamma}[\psi_{s+(k+n)\gamma} \times \cdot] - \beta_\gamma Q_{0,s} [\tilde{\psi}_{s} \times \cdot] \|_{TV} \\ \leq   C''[\psi_s(x) +  \psi_{s+k \gamma}(x)] e^{-\kappa \gamma n}  +\|\delta_{x_s} P_{s+k \gamma, s+(k+n)\gamma}[\psi_{s+(k+n)\gamma} \times \cdot] - \beta_\gamma Q_{0,s}[\tilde{\psi}_s \times \cdot] \|_{TV},
    \end{multline}
    which by \eqref{asymptotic-homogeneization} and the pointwise convergence of $(\psi_{s+k \gamma})_{k \in \Z_+}$ implies that
    \begin{equation} \label{bansaye}\lim_{n \to \infty} \| \delta_{x_s} P_{s,s+n\gamma}[\psi_{s+n\gamma} \times \cdot] - \beta_\gamma Q_{0,s}[\tilde{\psi}_s \times \cdot] \|_{TV}  = \lim_{n \to \infty} \limsup_{k \to \infty}  \| \delta_{x_s} P_{s,s+(k+n)\gamma}[\psi_{s+(k+n)\gamma} \times \cdot] - \beta_\gamma Q_{0,s} [\tilde{\psi}_{s} \times \cdot] \|_{TV}  = 0.\end{equation}
The weak ergodicity \eqref{mixing} implies therefore that the previous convergence actually holds for any initial distribution $\mu \in \cM_1(E_0)$ satisfying $\mu(\psi_0) < + \infty$, so that
\begin{equation}
    \label{jjj}
    \left\| \mu P_{0,s+n \gamma}[\psi_{s+n \gamma} \times \cdot] - \beta_\gamma Q_{0,s}[\tilde{\psi}_s \times \cdot] \right\|_{TV} \underset{n \to \infty}{\longrightarrow} 0.
\end{equation}
Since $\|\mu P_{0,s+n\gamma}[\psi_{s+n\gamma} \times \cdot] - \beta_\gamma Q_{0,s}[\tilde{\psi}_s \times \cdot]\|_{TV} \leq 2$ for all $\mu \in \cM_1(E_0)$, $s \geq 0$ and $n \in \Z_+$, \eqref{jjj} and Lebesgue's dominated convergence theorem implies that
$$\frac{1}{\gamma} \int_0^\gamma \|\mu P_{0,s+n\gamma}[\psi_{s+n\gamma} \times \cdot] - \beta_\gamma Q_{0,s}[\tilde{\psi}_s \times \cdot]\|_{TV}ds \underset{n \to \infty}{\longrightarrow} 0,$$
which implies that 
$$\left\|\frac{1}{\gamma}\int_0^\gamma \mu P_{0,s+n\gamma}[\psi_{s+n \gamma} \times \cdot]ds  - \frac{1}{\gamma} \int_0^\gamma \beta_\gamma Q_{0,s}[\tilde{\psi}_{s} \times \cdot] ds  \right\|_{TV} \underset{n \to \infty}{\longrightarrow} 0.$$
By Cesaro's lemma, this allows us to conclude that, for any $\mu \in \cM_1(E_0)$ such that $\mu(\psi_0) < + \infty$,
\begin{multline*}
    \left\|\frac{1}{t} \int_0^t \mu P_{0,s} [\psi_{s} \times \cdot] ds -  \frac{1}{\gamma} \int_0^\gamma \beta_\gamma Q_{0,s} [\tilde{\psi}_{s} \times \cdot] ds \right\|_{TV} \\ \leq \frac{1}{\lfloor \frac{t}{\gamma} \rfloor} \sum_{k=0}^{\lfloor \frac{t}{\gamma} \rfloor}\left\|\frac{1}{\gamma}\int_0^\gamma \mu P_{0,s+k\gamma}[\psi_{s+k \gamma} \times \cdot]ds  - \frac{1}{\gamma} \int_0^\gamma \beta_\gamma Q_{0,s}[\tilde{\psi}_{s} \times \cdot] ds  \right\|_{TV} + \left\|\frac{1}{t} \int_{\lfloor \frac{t}{\gamma} \rfloor \gamma}^t \mu P_{0,s}[\psi_{s} \times \cdot] ds\right\|_{TV}
    \underset{t \to \infty}{\longrightarrow} 0,
\end{multline*}
which concludes the proof of \eqref{tv}. \medskip

\textit{Third step.} In the same manner, we now prove that, for any $\mu \in \cM_1(E_0)$ such that $\mu(\psi_0) < + \infty$,
\begin{equation}
    \label{final}
    \left\|\frac{1}{t} \int_0^t \mu P_{0,s} ds -  \frac{1}{\gamma} \int_0^\gamma \beta_\gamma Q_{0,s} ds \right\|_{TV} \underset{t \to \infty}{\longrightarrow} 0.
\end{equation}
In fact, for any function $f$ bounded by $1$ and $\mu \in \cM_1(E_0)$ such that $\mu(\psi_0) < + \infty$,
\begin{multline*}\left| \mu P_{0,s+n \gamma}\left[\psi_{s+n\gamma} \times \frac{f}{\psi_{s+n \gamma}}\right] - \beta_\gamma Q_{0,s}\left[\tilde{\psi}_s \times \frac{f}{\tilde{\psi}_s}\right]\right| \\\leq  \left| \mu P_{0,s+n \gamma}\left[\psi_{s+n\gamma} \times \frac{f}{\psi_{s+n \gamma}}\right] - \beta_\gamma Q_{0,s}\left[\tilde{\psi}_s \times \frac{f}{{\psi}_{s+n\gamma}}\right]\right| + \left| \beta_\gamma Q_{0,s}\left[\tilde{\psi}_s \times \frac{f}{{\psi}_{s+n\gamma}}\right] - \beta_\gamma Q_{0,s}\left[\tilde{\psi}_s \times \frac{f}{\tilde{\psi}_s}\right]\right| \\
\leq  \left\| \mu P_{0,s+n \gamma}[\psi_{s+n \gamma} \times \cdot] - \beta_\gamma Q_{0,s}[\tilde{\psi}_s \times \cdot] \right\|_{TV} + \left| \beta_\gamma Q_{0,s}\left[\tilde{\psi}_s \times \frac{f}{{\psi}_{s+n\gamma}}\right] - \beta_\gamma Q_{0,s}\left[\tilde{\psi}_s \times \frac{f}{\tilde{\psi}_s}\right]\right|.
\end{multline*}
We now remark that, since $\psi_{s+n \gamma} \geq 1$ for any $s$ and $n \in \Z_+$, one has that
$$\left| \frac{\tilde{\psi}_s}{\psi_{s+n \gamma}} - 1 \right| \leq 1 + \tilde{\psi}_s.$$
Since $(\psi_{s + n \gamma})_{n \in \Z_+}$ converges pointwisely towards $\tilde{\psi}_s$ and $\beta_\gamma Q_{0,s} \tilde{\psi}_s < + \infty$, Lebesgue's dominated convergence theorem implies
$$\sup_{f \in \cB_1(E)} \left| \beta_\gamma Q_{0,s}\left[\tilde{\psi}_s \times \frac{f}{{\psi}_{s+n\gamma}}\right] - \beta_\gamma Q_{0,s}\left[\tilde{\psi}_s \times \frac{f}{\tilde{\psi}_s}\right]\right| \underset{n \to \infty}{\longrightarrow} 0.$$
Then, using \eqref{jjj}, one has 
\begin{equation*}
    \left\| \mu P_{0,s+n \gamma} - \beta_\gamma Q_{0,s} \right\|_{TV} \underset{n \to \infty}{\longrightarrow} 0, 
\end{equation*}
which allows us to conclude \eqref{final}, using the same argument as in the first step. \medskip

\textit{Fourth step.} In order to show the $\L^2$-ergodic theorem, we let $f \in \cB(E)$. 
For any $x \in E_0$ and $t \geq 0$,
\begin{multline*}\E_{0,x}\left[\left|\frac{1}{t} \int_0^t f(X_s)ds - \E_{0,x}\left[\frac{1}{t} \int_0^t f(X_s)ds\right] \right|^2\right] \\= \frac{2}{t^2} \int_0^t \int_s^t \left(\E_{0,x}[f(X_s)f(X_u)] - \E_{0,x}[f(X_s)]\E_{0,x}[f(X_u)]\right)duds 
\\ = \frac{2}{t^2} \int_0^t \int_s^t \E_{0,x}\left[f(X_s)\left(f(X_u)  - \E_{0,x}[f(X_u)]\right)\right]duds
\\ = \frac{2}{t^2} \int_0^t \int_s^t \E_{0,x}\left[f(X_s)\left(\E_{s,X_s}[f(X_u)]  - \E_{s,\delta_x P_{0,s}}[f(X_u)]\right)\right]duds,\end{multline*}
whereby the Markov property was used on the last line. By \eqref{mixing} (weak ergodicity) and \eqref{maj}, one obtains for any $s \leq t$,
\begin{equation}
    \label{mixing-prop}
    \left|\E_{s,X_s}[f(X_t)]  - \E_{s,\delta_x P_{0,s}}[f(X_t)]\right| \leq C'' \|f\|_\infty [\psi_s(X_s) + \psi_0(x)] e^{- \kappa (t-s)},~~~~\P_{0,x}-\text{almost surely,}
\end{equation}
whereby $C''$ was defined in the first part.
As a result, for any $x \in E_0$ and $t \geq 0$, 
\begin{multline*}
    \E_{0,x}\left[\left|\frac{1}{t} \int_0^t f(X_s)ds - \E_{0,x}\left[\frac{1}{t} \int_0^t f(X_s)ds\right] \right|^2\right] \leq \frac{2C''\|f\|_\infty}{t^2} \int_0^t \int_s^t \E_{0,x}[|f(X_s)|(\psi_s(X_s)+\psi_0(x))] e^{-\kappa(u-s)}duds \\
    = \frac{2C''\|f\|_\infty}{t^2} \int_0^t \E_{0,x}[|f(X_s)|(\psi_s(X_s)+\psi_0(x))] e^{\kappa s} \frac{e^{-\kappa s} - e^{- \kappa t}}{\kappa}ds \\
     = \frac{2C''\|f\|_\infty}{\kappa t} \times \E_{0,x}\left[\frac{1}{t} \int_0^t |f(X_s)|(\psi_s(X_s) + \psi_0(x))ds\right] - \frac{2C''\|f\|_\infty e^{-\kappa t}}{\kappa t^2} \int_0^t e^{\kappa s} \E_{0,x}[|f(X_s)|(\psi_s(X_s)+\psi_0(x))]ds.
\end{multline*}
Then, by \eqref{tv}, there exists a constant $\Tilde{C} > 0$ such that, for any $x \in E_0$, when $t \to \infty$,
\begin{equation}
\label{variance}
\E_{0,x}\left[\left|\frac{1}{t} \int_0^t f(X_s)ds - \E_{0,x}\left[\frac{1}{t} \int_0^t f(X_s)ds\right] \right|^2\right] \leq \frac{\tilde{C} \|f\|_{\infty} \psi_0(x)}{t} \times \frac{1}{\gamma} \int_0^\gamma \beta_\gamma Q_{0,s}[|f|\tilde{\psi}_s] ds  + o\left(\frac{1}{t}\right).\end{equation}
Since $f \in \cB(E)$ and by definition of the total variation distance, \eqref{final} implies that, for all $x \in E_0$, 
$$\left| \frac{1}{t} \int_0^t P_{0,s}f(x) -   \frac{1}{\gamma} \int_0^\gamma \beta_\gamma Q_{0,s} f ds \right| \leq \|f\|_\infty  \left\|\frac{1}{t} \int_0^t \delta_x P_{0,s} ds -  \frac{1}{\gamma} \int_0^\gamma \beta_\gamma Q_{0,s} ds \right\|_{TV} \underset{t \to \infty}{\longrightarrow} 0.$$ 
Then, using \eqref{final}, one deduces that, for any $x \in E_0$ and bounded function $f$, 
\begin{multline*}\E_{0,x}\left[\left|\frac{1}{t} \int_0^t f(X_s)ds - \frac{1}{\gamma} \int_0^\gamma \beta_\gamma Q_{0,s} f ds \right|^2\right] \\ \leq 2 \left(\E_{0,x}\left[\left(\frac{1}{t} \int_0^t f(X_s)ds - \frac{1}{t} \int_0^t P_{0,s}f(x)\right)^2\right] + \left| \frac{1}{t} \int_0^t P_{0,s}f(x) -   \frac{1}{\gamma} \int_0^\gamma \beta_\gamma Q_{0,s} f ds \right|^2\right) \underset{t \to \infty}{\longrightarrow} 0.\end{multline*}
The convergence for any probability measure $\mu \in \cM_1(E_0)$ comes from Lebesgue's dominated convergence theorem.  \medskip

\textit{Fifth step.} We now fix non-negative $f \in \cB(E)$, and $\mu \in \cM_1(E_0)$ satisfying $\mu(\psi_0) < + \infty$. The following proof is inspired by the proof of \cite[Theorem 12]{Vass2018}.

Since $\mu(\psi_0) < + \infty$, the inequality \eqref{variance} entails that there exists a finite constant $C_{f,\mu} \in (0,\infty)$ such that, for $t$ large enough,
$$\E_{0,\mu}\left[\left|\frac{1}{t} \int_0^t f(X_s)ds - \E_{0,\mu}\left[\frac{1}{t} \int_0^t f(X_s)ds\right] \right|^2\right] \leq \frac{C_{f,\mu}}{t}.$$
Then, for $n$ large enough,
$$\E_{0,\mu}\left[\left|\frac{1}{n^2} \int_0^{n^2} f(X_s)ds - \E_{0,\mu}\left[\frac{1}{n^2} \int_0^{n^2} f(X_s)ds\right] \right|^2\right] \leq \frac{C_{f,\mu}}{n^2}.$$
Then, by Chebyshev's inequality and the Borel-Cantelli lemma, this last inequality implies that
$$\left|\frac{1}{n^2} \int_0^{n^2} f(X_s)ds - \E_{0,\mu}\left[\frac{1}{n^2} \int_0^{n^2} f(X_s)ds\right] \right| \underset{n \to \infty}{{\longrightarrow}} 0,~~~~\P_{0,\mu}-\text{almost surely.}$$
One thereby obtains by the convergence \eqref{final} that
\begin{equation}\label{asdiscrete}\frac{1}{n^2} \int_0^{n^2} f(X_s)ds  \underset{n \to \infty}{{\longrightarrow}} \frac{1}{\gamma} \int_0^\gamma \beta_\gamma Q_{0,s} f ds,~~~~\P_{0,\mu}-\text{almost surely.}\end{equation}
Since the nonnegativity of $f$ is assumed, this implies that for any $t > 0$ we have
$$\int_0^{\lfloor \sqrt{t} \rfloor^2} f(X_s)ds \leq \int_0^t f(X_s)ds \leq \int_0^{\lceil \sqrt{t} \rceil^2} f(X_s)ds.$$
These inequalities and \eqref{asdiscrete} then give that 
$$\frac{1}{t} \int_0^t f(X_s)ds ~\underset{t \to \infty}{{\longrightarrow}}~ \frac{1}{\gamma} \int_0^\gamma \beta_\gamma Q_{0,s} f ds,~~~~\P_{0,\mu}-\text{almost surely.}$$
In order to conclude that the result holds for any bounded measurable function $f$, it is enough to decompose $f = f_+ - f_-$ with $f_+ := f \lor 0$ and $f_- = (-f) \lor 0$ and apply the above convergence to $f_+$ and $f_-$. This concludes the proof of Theorem \ref{thm}.
\end{proof}

\begin{proof}[Proof of Corollary \ref{corollary}] 
We remark as in the previous proof that, if $\|f\|_\infty \leq 1$ and $\psi_s = \1$, an upper-bound for the inequality \eqref{variance} can be obtained, which does not depend on $f$ and $x$. Likewise, the convergence \eqref{jjj} holds uniformly in the initial measure due to \eqref{mixing-prop}.
\end{proof}


\begin{remark}
The proof of Theorem \ref{thm}, as written above, does not allow to deal with semigroups satisfying a Doeblin condition with time-dependent constant $c_s$, that is such that there exists $t_0 \geq 0$ and a family of probability measure $(\nu_t)_{t \geq 0}$ on $(E_t)_{t \geq 0}$ such that, for all $s \geq 0$ and $x \in E_s$,
$$\delta_x P_{s,s+t_0} \geq c_{s+t_0} \nu_{s+t_0}.$$
In fact, under the condition written above, we can show (see for example the proof of the formula (2.7) of \cite[Theorem 2.1]{CV2016}) that, for all $s \leq t$ and $\mu_1, \mu_2 \in \cM_1(E_s)$, 
$$\| \mu_1 P_{s,t} - \mu_2 P_{s,t} \|_{TV} \leq 2 \prod_{k=1}^{\left \lfloor \frac{t-s}{t_0} \right \rfloor} (1-c_{t-k t_0}).$$
Hence, by this last inequality with $\mu_1 = \delta_x P_{s,s+k\gamma}$, $\mu_2 = \delta_x$, replacing $s$ by $s+k\gamma$ and $t$ by $s+(k+n)\gamma$, one obtains
$$\|\delta_x P_{s,s+(k+n)\gamma} - \delta_x P_{s+k\gamma,s+(k+n)\gamma}\|_{TV} \leq 2 \prod_{l=1}^{\lfloor \frac{n\gamma}{t_0}\rfloor} (1-c_{s+(k+n)\gamma-lt_0}),$$
replacing therefore the inequality \eqref{premiere-etape} in the proof of Theorem \ref{thm}.
Plugging therefore this last inequality into the formula \eqref{pour-la-remark}, one obtains
$$\| \delta_{x} P_{s,s+(k+n)\gamma} - {\beta}_{\gamma} Q_{0,s} \|_{TV} \leq  2 \prod_{l=1}^{\left \lfloor \frac{n \gamma}{t_0} \right \rfloor} (1-c_{s + (k+n)\gamma-l t_0})  + \| \delta_{x} P_{s+k\gamma, s+(k+n)\gamma} - \beta_\gamma Q_{0,s} \|_{TV}.$$
Hence, we see that we cannot conclude a similar result when $c_s \longrightarrow 0$, as $s \to + \infty$ since, for $n$ fixed,
$$\limsup_{k \to \infty} \prod_{l=1}^{\left \lfloor \frac{n \gamma}{t_0} \right \rfloor} (1-c_{s+(k+n)\gamma-l t_0}) =  1.$$
\end{remark}

\section{Application to quasi-stationarity with moving boundaries}
\label{3}

In this section, $(X_t)_{t \geq 0}$ is assumed as being a time-homogeneous Markov process. 
We consider a family of measurable subsets $(A_t)_{t \geq 0}$ of $E$, and denote the hitting time
$$\tau_A := \inf\{t \geq 0 : X_t \in A_t\}.$$
For all $s \leq t$, denote by $\cF_{s,t}$ the $\sigma$-field generated by the family $(X_u)_{s \leq u \leq t}$ and $\cF_t := \cF_{0,t}$. 
Assume that $\tau_A$ is a stopping time with respect to the filtration $(\cF_{t})_{t \geq 0}$. Assume also that for any $x \not \in A_0$, 
$$\P_{0,x}[\tau_A < + \infty] = 1 ~~~~~~\text{ and }~~~~~~ \P_{0,x}[\tau_A > t] > 0,~~\forall t \geq 0.$$
We will be interested in a notion of \textit{quasi-stationarity with moving boundaries}, which studies the asymptotic behavior of the Markov process $(X_t)_{t \geq 0}$ conditioned not to hit $(A_t)_{t \geq 0}$ up to the time $t$. For non-moving boundaries ($A_t = A_0$ for any $t \geq 0$), the \textit{quasi-limiting distribution}  is defined as a probability measure $\alpha$ such that, for at least one initial measure $\mu$ and for all measurable subsets $\mathcal{A} \subset E$,
$$\P_{0,\mu}[X_t \in \mathcal{A} | \tau_A > t] \underset{t \to \infty}{\longrightarrow} \alpha(\mathcal{A}).$$
Such a definition is equivalent (still in the non-moving framework) to the notion of \textit{quasi-stationary distribution} defined as a probability measure $\alpha$ such that, for any $t \geq 0$, 
\begin{equation}
\label{qsd-equation}\P_{0,\alpha}[X_t \in \cdot | \tau_A > t] = \alpha.\end{equation}
If quasi-limiting and quasi-stationary distributions are in general well-defined for time-homogeneous Markov processes and non-moving boundaries (see \cite{CMSM,MV2012} for a general overview on the theory of quasi-stationarity), these notions could be not well-defined for time-inhomogeneous Markov processes or moving boundaries and are not equivalent anymore. In particular, under reasonable assumptions on irreducibility, it was shown in \cite{OCAFRAIN2017} that the notion of quasi-stationary distribution as defined by \eqref{qsd-equation} is not well-defined for time-homogeneous Markov processes absorbed by moving boundaries. 

Another asymptotic notion to study is the \textit{quasi-ergodic distribution}, related to a conditional version of the ergodic theorem and usually defined as follows.
\begin{definition}
\label{qed}
 A probability measure $\beta$ is a \textit{quasi-ergodic distribution} if, for some initial measure $\mu \in \cM_1(E \setminus A_0)$ and for any bounded continuous function $f$,
$$\E_{0,\mu}\left[\frac{1}{t} \int_0^t f(X_s)ds \middle| \tau_A > t\right] \underset{t \to \infty}{\longrightarrow} \beta(f).$$
\end{definition}
 In the time-homogeneous setting (in particular for non-moving boundaries), this notion has been extensively studied (see for example \cite{BR1999,CV2017,CJ2018,colonius2021quasi,Darroch1965,Guoman2018,HYZ2019,GHY2016,OCAFRAIN2017}). In the \textit{"moving boundaries"} framework, the existence of quasi-ergodic distributions has been dealt with in \cite{OCAFRAIN2017} for Markov chains on finite state spaces absorbed by periodic boundaries, and in \cite{ocafrain2018} for processes satisfying a Champagnat-Villemonais condition (see Assumption (A') set later) absorbed by converging or periodic boundaries. In this last paper, the existence of the quasi-ergodic distribution is dealt with through the following inequality  (see \cite[Theorem 1]{ocafrain2018}), holding for any initial state $x$, $s \leq t$ and for some constant $C, \gamma > 0$ independent of $x$, $s$ and $t$:
$$\|\P_{0,x}(X_s \in \cdot | \tau_A > t) - \Q_{0,x}(X_s \in \cdot)\|_{TV} \leq C e^{-\gamma (t-s)},$$
where the family of probability measure $(\Q_{s,x})_{s \geq 0, x \in E_s}$ is defined by
$$\Q_{s,x}[\Gamma] := \lim_{T \to \infty} \P_{s,x}[\Gamma | \tau_A > T],~~~~\forall s \leq t, x \in E \setminus A_s, \Gamma \in \cF_{s,t}.$$
Moreover, \cite[Proposition 3.1]{CV2016}, there exists a family of positive bounded functions $(\eta_t)_{t \geq 0}$ defined in  such that, for all $s \leq t$ and $x \in E_s$,
$$\E_{s,x}(\eta_t(X_t)\1_{\tau_A > t}) = \eta_s(x).$$
Then, we can show (this is actually shown in \cite{CV2016}) that
$$\Q_{s,x}(\Gamma) = \E_{s,x}(\1_{\Gamma, \tau_A > t} \frac{\eta_t(X_t)}{\eta_s(x)})$$
and that, for all $\mu \in \cM_1(E_0)$,
$$\|\P_{0,\mu}(X_s \in \cdot | \tau_A > t) - \Q_{0,\eta_0*\mu}(X_s \in \cdot)\|_{TV} \leq C e^{-\gamma (t-s)}.$$
where $\eta_0 * \mu(dx) := \frac{\eta_0(x)\mu(dx)}{\mu(\eta_0)}$. By triangular inequality, one has
\begin{equation}
\label{using}
\left\|\frac{1}{t} \int_0^t \P_{0,\mu}[X_s \in \cdot | \tau_A > t]ds - \frac{1}{t} \int_0^t \Q_{0,\eta_0 * \mu}[X_s \in \cdot]ds \right\|_{TV} \leq \frac{C}{\gamma t},~~~~\forall t > 0,\end{equation}
In particular, the inequality \eqref{using} implies that there exists a quasi-ergodic distribution $\beta$ for the process $(X_t)_{t \geq 0}$ absorbed by $(A_t)_{t \geq 0}$ if and only if there exist some probability measures $\mu \in \cM_1(E_0)$ such that $\frac{1}{t} \int_0^t \Q_{0,\eta_0 * \mu}[X_s \in \cdot]ds$ converges weakly to $\beta$, when $t$ goes to infinity. In other terms, under Assumption (A'), the existence of a quasi-ergodic distribution for the absorbed process is equivalent to the law of large number for its $Q$-process.



Assumption (A') is now set. 

\begin{assumptiona}
\label{doeblin-assumption} There exists a family of probability measures $(\nu_t)_{t \geq 0}$, defined on $E \setminus A_t$ for each $t$, such that:
\begin{enumerate}[({A'}1)]
\item There exists $t_0 \geq 0$ and $c_1 > 0$ such that
$$\P_{s,x}[X_{s+t_0} \in \cdot | \tau_A > s + t_0] \geq c_1 \nu_{s+t_0},~~~~\forall s \geq 0, \forall x \in E \setminus A_s.$$
\item There exists $c_2 > 0$ such that
$$\P_{s,\nu_s}[\tau_A > t] \geq c_2 \P_{s,x}[\tau_A > t],~~~~\forall s \leq t, \forall x \in E \setminus A_s.$$
\end{enumerate} 
\end{assumptiona}
In what follows, we say that the couple $\{(X_t)_{t \geq 0}, (A_t)_{t \geq 0}\}$ satisfies Assumption (A') when it holds for the Markov process $(X_t)_{t \geq 0}$ considered as absorbed by the moving boundary $(A_t)_{t \geq 0}$. 

The condition (A'1) is a conditional version of the Doeblin condition \eqref{doeblin} and (A'2) is a Harnack-like inequality on the probabilities of surviving, necessary to deal with the conditioning. They are equivalent to the set of conditions presented in \cite[Definition 2.2]{BCG2017}, when the non-conservative semigroup is sub-Markovian. In the time-homogeneous framework, we obtain the Champagnat-Villemonais condition defined in \cite{CV2014} (see Assumption (A)), shown as being equivalent to the exponential uniform convergence to quasi-stationarity in total variation. 

\textcolor{black}{In \cite{ocafrain2018}, the existence of a unique quasi-ergodic distribution has been only proved for converging or periodic boundaries. However, we can expect such a result of existence (and uniqueness) for other kinds of movement for the boundary. Hence, the aim of this section is to extend the results on the existence of quasi-ergodic distributions obtained in \cite{ocafrain2018} for Markov processes absorbed by asymptotically periodic moving boundaries.}

 Now, let us state the following theorem.


\begin{theorem}
\label{qsd}
Assume that there exists a $\gamma$-periodic sequence of subsets $(B_t)_{t \geq 0}$ such that, for any $s \in [0,\gamma)$, $$E_s' := E \setminus \bigcap_{k \in \Z_+} \bigcup_{l \geq k} A_{s+l \gamma} \cup B_s \ne \emptyset$$
and there exists $x_s \in E_s$ such that, for any $n \leq N$,
\begin{equation}
    \label{condition}
    \|\P_{s+k \gamma, x_s}[X_{s+(k+n)\gamma} \in \cdot, \tau_A > s+(k+N)\gamma] - \P_{s, x_s}[X_{s+n\gamma} \in \cdot, \tau_B > s+N\gamma]\|_{TV}  \underset{k \to \infty}{\longrightarrow} 0.
\end{equation}
Assume also that Assumption (A') is satisfied by the couples $\{(X_t)_{t \geq 0}, (A_t)_{t \geq 0}\}$ and $\{(X_t)_{t \geq 0}, (B_t)_{t \geq 0}\}$. 

Then there exists a probability measure $\beta \in \cM_1(E)$ such that 
\begin{equation}
\label{uniform}
\sup_{\mu \in \cM_1(E \setminus A_0)} \sup_{f \in \cB_1(E)} \E_{0,\mu}\left[ \left| \frac{1}{t} \int_0^t f(X_s)ds - \beta(f) \right|^2 \middle| \tau_A > t\right] \underset{t \to \infty}{\longrightarrow} 0.\end{equation}
\end{theorem}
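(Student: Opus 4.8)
The plan is to reduce Theorem \ref{qsd} to Theorem \ref{thm} applied to the conditioned semigroup, i.e. to the $Q$-process attached to $\{(X_t)_{t \geq 0},(A_t)_{t \geq 0}\}$. First I would recall, as set up in the discussion preceding Assumption (A'), that under (A') there is a family of positive bounded functions $(\eta_t)_{t \geq 0}$ with $\E_{s,x}(\eta_t(X_t)\1_{\tau_A > t}) = \eta_s(x)$, and that the conditioned evolution defines a genuine time-inhomogeneous Markovian semigroup: set $E_t := E \setminus A_t$ and
$$\tilde{P}_{s,t}f(x) := \frac{1}{\eta_s(x)}\E_{s,x}\!\left[f(X_t)\eta_t(X_t)\1_{\tau_A > t}\right] = \Q_{s,x}[f(X_t)].$$
The inequality \eqref{using} shows that the Cesàro average of $\P_{0,\mu}[X_s \in \cdot \mid \tau_A > t]$ over $s\in[0,t]$ is, up to an error $O(1/t)$, the Cesàro average of $\Q_{0,\eta_0*\mu}[X_s \in \cdot] = (\eta_0*\mu)\tilde{P}_{0,s}$; so it suffices to prove an $\L^2$ ergodic theorem, uniform in the initial law, for the semigroup $(\tilde{P}_{s,t})_{s\leq t}$. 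Concretely I would write, for $f\in\cB_1(E)$,
$$\E_{0,\mu}\!\left[\Big(\tfrac1t\!\int_0^t f(X_s)ds - \beta(f)\Big)^2 \,\Big|\, \tau_A > t\right] = \frac{\E_{0,\mu}\!\left[\big(\tfrac1t\int_0^t f(X_s)ds - \beta(f)\big)^2\eta_t(X_t)\1_{\tau_A>t}\right]}{\E_{0,\mu}[\eta_t(X_t)\1_{\tau_A>t}]}\cdot\frac{\E_{0,\mu}[\eta_t(X_t)\1_{\tau_A>t}]}{\P_{0,\mu}[\tau_A>t]},$$
bound the second factor by $\|\eta\|_\infty/ (\text{something})$ using (A'2), and recognize the numerator of the first factor as $\mu(\eta_0)\,\tilde{\E}_{0,\eta_0*\mu}\big[(\tfrac1t\int_0^t f(X_s)ds-\beta(f))^2\big]$ up to the normalization $\E_{0,\mu}[\eta_t(X_t)\1_{\tau_A>t}]$; the $\eta_t$-weights are bounded above and below away from $0$ uniformly, so the conditional $\L^2$ error is comparable to the $\tilde{\P}$-$\L^2$ error and it is enough to apply Corollary \ref{corollary} to $(\tilde{P}_{s,t})_{s\leq t}$.

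Next I would verify the hypotheses of Corollary \ref{corollary} for $(\tilde{P}_{s,t})_{s\leq t}$. The Doeblin condition for the conditioned semigroup is exactly what (A'1)+(A'2) give: this is the content of the equivalence mentioned in the excerpt with \cite[Definition 2.2]{BCG2017}, and one checks directly that $\delta_x\tilde{P}_{s,s+t_0} \geq c_1 c_2\, \tilde\nu_{s+t_0}$ for the appropriately $\eta$-reweighted version $\tilde\nu_t$ of $\nu_t$ — indeed, $\delta_x\tilde{P}_{s,s+t_0}(f) = \E_{s,x}[f(X_{s+t_0})\eta_{s+t_0}(X_{s+t_0})\1_{\tau_A>s+t_0}]/\eta_s(x) \geq \P_{s,x}[\tau_A>s+t_0]\,\nu_{s+t_0}(f\eta_{s+t_0})/\eta_s(x) \cdot c_1$, and $\P_{s,x}[\tau_A>s+t_0]/\eta_s(x)$ is bounded below using that $\eta_s(x)=\E_{s,x}[\eta_{s+t_0}(X_{s+t_0})\1_{\tau_A>s+t_0}]\leq\|\eta\|_\infty\P_{s,x}[\tau_A>s+t_0]$. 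One does the same for $(\tilde{Q}_{s,t})$, the conditioned semigroup of $\{(X_t),(B_t)\}$, which is $\gamma$-periodic because $(B_t)$ is; here I must first observe that Assumption (A') for $\{(X_t),(B_t)\}$ together with $\gamma$-periodicity of $(B_t)$ forces the survival kernels, the $\eta$-functions, and hence $\tilde{Q}_{s,t}$ to be $\gamma$-periodic — that is a short argument from uniqueness of $(\eta_t)$ up to multiplicative constants (absorb the constant by the normalization $\lceil s/\gamma\rceil$-periodically) or simply from the fact that $\eta_t$ can be taken as $\lim_T \P_{t,\cdot}[\tau_B>T]/(\text{normalizer})$, which is visibly $\gamma$-periodic in $t$. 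Then I would check that $(\tilde{P}_{s,t})$ is asymptotically $\gamma$-periodic in total variation with auxiliary semigroup $(\tilde{Q}_{s,t})$: this is where hypothesis \eqref{condition} enters. From \eqref{condition} (with $N$ fixed, $n\leq N$, $k\to\infty$), dividing the unconditioned sub-Markov kernels appropriately and using that the $\eta$-weights for $A$ converge to those for $B$ along the shift — which one extracts by sending $N\to\infty$ in \eqref{condition}, since $\eta^A_t(x) \propto \lim_N \P_{t,x}[\tau_A>t+N\gamma]/(\text{normalizer})$ — one obtains $\|\delta_{x_s}\tilde{P}_{s+k\gamma,s+(k+n)\gamma} - \delta_{x_s}\tilde{Q}_{s,s+n\gamma}\|_{TV}\to 0$ as $k\to\infty$ for every fixed $n$, which is precisely \eqref{asympt-hom} with $\psi\equiv\tilde\psi\equiv\1$.

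The main obstacle, and the step that needs the most care, is this last point: converting the joint convergence \eqref{condition} of the \emph{unconditioned} sub-Markovian kernels (with the fixed far-away horizon $s+(k+N)\gamma$) into convergence of the \emph{conditioned} transition kernels $\tilde{P}_{s+k\gamma,s+(k+n)\gamma}$, which requires controlling the ratios $\eta^A_{s+(k+n)\gamma}/\eta^A_{s+k\gamma}$ and showing they converge to $\eta^B_{s+n\gamma}/\eta^B_s$ as $k\to\infty$. The delicate ingredients are: (a) that these ratios are bounded above and below uniformly in $k$, which follows from (A'2) applied to $\{(X_t),(A_t)\}$ together with the survival-probability comparison $\eta_s(x)\asymp \P_{s,x}[\tau_A>s+u]$ for bounded $u$; (b) an exchange of the limits $k\to\infty$ and $N\to\infty$, legitimized by the uniform exponential decay of $\|\P_{s,x}(X_r\in\cdot\mid\tau_A>t)-\Q_{s,\eta_s*\delta_x}(X_r\in\cdot)\|_{TV}\leq Ce^{-\gamma(t-r)}$ recalled in \eqref{using}, which holds with constants uniform in $s$ under (A'). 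Once the asymptotic periodicity in total variation of $(\tilde{P}_{s,t})$ is established with auxiliary semigroup $(\tilde{Q}_{s,t})$ satisfying Doeblin, Corollary \ref{corollary} yields $\beta := \tfrac1\gamma\int_0^\gamma \beta_\gamma\tilde{Q}_{0,s}\,ds$ and the uniform $\L^2$ convergence \eqref{uniform}, completing the proof.
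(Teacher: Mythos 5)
Your overall strategy matches the paper's: pass to the $Q$-process $(\Q^A_{s,x})$ associated with $\{(X_t),(A_t)\}$, show that the conditioned semigroup is asymptotically $\gamma$-periodic in total variation with auxiliary semigroup the $Q$-process of $\{(X_t),(B_t)\}$, check the Doeblin condition from (A'), apply Corollary~\ref{corollary}, and transfer the $\L^2$ bound back to the conditional expectation. However, there are two concrete problems in the execution, and a key technical step is only sketched whereas the paper takes a more elementary route.

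First, the claimed identity
\[
\E_{0,\mu}\!\left[g(X)\,\middle|\,\tau_A>t\right] = \frac{\E_{0,\mu}\!\left[g(X)\eta_t(X_t)\1_{\tau_A>t}\right]}{\E_{0,\mu}[\eta_t(X_t)\1_{\tau_A>t}]}\cdot\frac{\E_{0,\mu}[\eta_t(X_t)\1_{\tau_A>t}]}{\P_{0,\mu}[\tau_A>t]}
\]
is false: the right-hand side equals $\E_{0,\mu}[g(X)\eta_t(X_t)\1_{\tau_A>t}]/\P_{0,\mu}[\tau_A>t]$, which is the conditional expectation of $g\cdot\eta_t(X_t)$, not of $g$. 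Second, the statement that ``the $\eta_t$-weights are bounded above and below away from $0$ uniformly'' is not true; $\eta_t$ vanishes on the absorbing set and near its boundary, so this comparability argument does not go through as written. The paper instead reduces the conditional $\L^2$ error directly using the bound from \cite[Theorem~1]{ocafrain2018}, namely $|\E_{0,\mu}[f(X_s)f(X_u)\,|\,\tau_A>t]-\E^{\Q^A}_{0,\eta_0*\mu}[f(X_s)f(X_u)]|\leq C\|f\|_\infty e^{-\kappa(t-u)}$, and integrates $\frac{2}{t^2}\int_0^t\int_s^t(\cdots)\,du\,ds$ to get an $O(1/t)$ discrepancy uniform in $\mu$ and $f$; this avoids any need for pointwise lower bounds on $\eta_t$.

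The step you flag as ``the main obstacle'' --- converting \eqref{condition} into $\|\delta_{x_s}\Q^A_{s+k\gamma}[X_{s+(k+n)\gamma}\in\cdot]-\delta_{x_s}\Q^B_{s}[X_{s+n\gamma}\in\cdot]\|_{TV}\to 0$ --- is where the paper genuinely diverges from your route. You propose to control the $\eta$-ratios $\eta^A_{s+(k+n)\gamma}/\eta^A_{s+k\gamma}$ and show they converge to $\eta^B_{s+n\gamma}/\eta^B_s$, but you do not carry this out, and the required uniform bounds in $k$ are delicate precisely because $\eta$ is unbounded below. The paper sidesteps $\eta$ entirely: it uses the purely algebraic estimate \eqref{degueu},
\[
\|\P_{s,x}[X_t\in\cdot\,|\,\tau_A>T]-\P_{s,x}[X_t\in\cdot\,|\,\tau_B>T]\|_{TV}\leq 2\,\frac{\|\P_{s,x}[X_t\in\cdot,\tau_A>T]-\P_{s,x}[X_t\in\cdot,\tau_B>T]\|_{TV}}{\P_{s,x}[\tau_B>T]},
\]
together with the exponential approximation of $\Q^A,\Q^B$ by the finite-$T$ conditionals, to obtain $\limsup_{k\to\infty}\|\Q^A_{s+k\gamma,x_s}[X_{s+(k+n)\gamma}\in\cdot]-\Q^B_{s,x_s}[X_{s+n\gamma}\in\cdot]\|_{TV}\leq 2Ce^{-\kappa\gamma(N-n)}$; the denominator $\P_{s+k\gamma,x_s}[\tau_B>s+(k+N)\gamma]=\P_{s,x_s}[\tau_B>s+N\gamma]$ is a fixed positive number by periodicity of $(B_t)$, so the bound goes to zero in $k$ and then in $N$. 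This gives asymptotic periodicity in TV without ever comparing $\eta^A$ to $\eta^B$. Adopting this comparison would both close your gap and eliminate the need for the problematic lower bounds on $\eta$.
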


\begin{remark}
Remark that the condition \eqref{condition} implies that, for any $n \in \Z_+$,
$$\P_{s+k \gamma,x_s}[\tau_A > s +(k+n) \gamma] \underset{k \to \infty}{\longrightarrow} \P_{s,x_s}[\tau_B > s + n \gamma].$$
Under the additional condition $B_t \subset A_t$ for all $t \geq 0$, these two conditions are equivalent, since for all $n \leq N$, 
\begin{multline*}
\|\P_{s+k \gamma, x_s}[X_{s+(k+n)\gamma} \in \cdot, \tau_A > s+(k+N)\gamma] - \P_{s, x_s}[X_{s+n\gamma} \in \cdot, \tau_B > s+N\gamma]\|_{TV} \\ = \|\P_{s+k\gamma,x_s}[X_{s+(k+n)\gamma} \in \cdot, \tau_B \leq s+(k+N) \gamma  < \tau_A] \|_{TV} \\ \leq \P_{s+k\gamma,x_s}[\tau_B \leq s+(k+N) \gamma  < \tau_A]   \\=  |\P_{s+k \gamma, x_s}[\tau_A > s+(k+N)\gamma] - \P_{s, x_s}[\tau_B > s+N\gamma]|,\end{multline*}
where we used the periodicity of $(B_t)_{t \geq 0}$, writing that $\P_{s, x_s}[X_{s+n\gamma} \in \cdot, \tau_B > s+N\gamma] = \P_{s+k\gamma, x_s}[X_{s+(k+n)\gamma} \in \cdot, \tau_B > s+(k+N)\gamma]$ for all $k \in \Z_+$. 
This implies the following corollary.
\end{remark}

\begin{corollary}
\label{qsd-cor}
Assume that there exists a $\gamma$-periodic sequence of subsets $(B_t)_{t \geq 0}$, with $B_t \subset A_t$ for all $t \geq 0$, such that, for any $s \in [0,\gamma)$, there exists $x_s \in E'_s$ such that, for any $n \leq N$,
$$\P_{s+k \gamma,x_s}[\tau_A > s +(k+n) \gamma] \underset{k \to \infty}{\longrightarrow} \P_{s,x_s}[\tau_B > s + n \gamma].$$
Assume also that Assumption (A') is satisfied by $\{(X_t)_{t \geq 0}, (A_t)_{t \geq 0}\}$ and $\{(X_t)_{t \geq 0}, (B_t)_{t \geq 0}\}$. 

Then there exists $\beta \in \cM_1(E)$ such that \eqref{uniform} holds.

\end{corollary}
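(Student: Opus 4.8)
The plan is to reduce the conditional ergodic theorem for the absorbed process to the unconditional ergodic theorem of Theorem \ref{thm}, applied to the $Q$-process (the process conditioned never to be absorbed). First I would recall, from \cite{CV2016}, that under Assumption (A') applied to $\{(X_t)_{t \geq 0}, (A_t)_{t \geq 0}\}$ there is a family of positive bounded functions $(\eta_t^A)_{t \geq 0}$ with $\E_{s,x}(\eta_t^A(X_t)\1_{\tau_A > t}) = \eta_s^A(x)$, defining the $Q$-semigroup
$$P_{s,t}^{(A)} f(x) := \frac{1}{\eta_s^A(x)} \E_{s,x}\!\left( f(X_t) \eta_t^A(X_t) \1_{\tau_A > t} \right),$$
which is a genuine (conservative) time-inhomogeneous Markovian semigroup on the state spaces $E_t := E \setminus A_t$; similarly define $Q_{s,t}^{(B)}$ from the $\gamma$-periodic boundary $(B_t)$, which is itself a $\gamma$-periodic semigroup since $(B_t)$ is $\gamma$-periodic. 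By the inequality \eqref{using} (a consequence of the uniform exponential ergodicity proved in \cite{ocafrain2018}), the conditional time-average $\frac1t\int_0^t \P_{0,\mu}[X_s \in \cdot \mid \tau_A > t]\,ds$ has the same limiting behaviour as $\frac1t\int_0^t \delta_{\eta_0^A * \mu} P_{0,s}^{(A)}\,ds$, uniformly in $\mu$; so it suffices to prove the $\L^2$ ergodic theorem \eqref{uniform} for the $Q$-process $P^{(A)}$ conditioned on nothing, i.e. to verify that $\{(E_t,\cE_t), (P_{s,t}^{(A)})\}$ is asymptotically $\gamma$-periodic in total variation with auxiliary semigroup $(Q_{s,t}^{(B)})$, and that both satisfy a Doeblin condition — then Corollary \ref{corollary} gives exactly \eqref{uniform}.

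The Doeblin condition for $P^{(A)}$ and $Q^{(B)}$ is where Assumption (A') does its work: (A'1) together with (A'2) is known (this is essentially \cite[Theorem 2.1]{CV2016} or the argument in \cite[Definition 2.2]{BCG2017}) to be equivalent to the existence of $t_0' \geq 0$, $c' > 0$ and probability measures $(\bar\nu_t)$ with $\delta_x P_{s,s+t_0'}^{(A)} \geq c' \bar\nu_{s+t_0'}$ for all $s$ and all $x \in E \setminus A_s$; the same for $Q^{(B)}$. This is the Doeblin condition in the sense of the Remark after Theorem \ref{thm}, so hypothesis 1 of Corollary \ref{corollary} holds (with $\psi_s = \tilde\psi_s = \1$, so hypothesis 2 of Theorem \ref{thm} is automatic).

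The main obstacle is verifying the asymptotic-periodicity condition \eqref{asympt-hom} (with $\psi = \tilde\psi = \1$) for $P^{(A)}$ from the hypothesis \eqref{condition} on the absorbed, unconditioned, semigroups. Fix $s \in [0,\gamma)$ and $x_s \in E_s'$. For the $Q$-process one has, for $n \leq N$,
$$\delta_{x_s} P_{s+k\gamma,\,s+(k+n)\gamma}^{(A)}(dy) = \frac{\eta_{s+(k+n)\gamma}^A(y)}{\eta_{s+k\gamma}^A(x_s)}\, \P_{s+k\gamma,x_s}[X_{s+(k+n)\gamma} \in dy,\ \tau_A > s+(k+n)\gamma],$$
and one wants this to converge, as $k \to \infty$, to the analogous expression for $Q^{(B)}$. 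The strategy is: (a) use \eqref{condition} with $N$ large, plus the Markov property at time $s+(k+n)\gamma$ applied to $\1_{\tau_A > s+(k+N)\gamma}$, to transfer the convergence from the "survival up to $s+(k+N)\gamma$" events to the full $Q$-process expressions, controlling the error by the uniform exponential decay of $\|\P_{s+(k+n)\gamma,x}[X_\cdot \in \cdot \mid \tau_A > s+(k+N)\gamma] - \Q^{(A)}_{\cdot}\|_{TV}$ from \cite{ocafrain2018}, which is $O(e^{-\gamma'(N-n)\gamma})$ uniformly in $k$ and $x$; (b) identify the limit of the Radon–Nikodym factors $\eta^A$ along the period with the corresponding $\eta^B$ — concretely, $\eta_{s+k\gamma}^A(x_s) = \lim_{T\to\infty} \P_{s+k\gamma,x_s}[\tau_A > T] / (\text{normalisation})$ converges to the periodic quantity built from $\eta^B$ by the same $N \to \infty$ after $k \to \infty$ interchange, justified exactly as in the second step of the proof of Theorem \ref{thm} (equations \eqref{asymptotic-homogeneization}–\eqref{bansaye}), using that the $Q$-process for $B$ is $\gamma$-periodic. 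Then a double-limit argument ($\lim_{N\to\infty}\limsup_{k\to\infty}$) delivers \eqref{asympt-hom} with $\psi = \tilde\psi = \1$. With asymptotic periodicity and the two Doeblin conditions in hand, Corollary \ref{corollary} applied to $P^{(A)}$ yields $\sup_{\mu}\sup_{f \in \cB_1} \E_{0,\eta_0^A*\mu}^{(A)}[|\frac1t\int_0^t f(X_s)ds - \frac1\gamma\int_0^\gamma \beta_\gamma Q_{0,s}^{(B)} f\,ds|^2] \to 0$, and \eqref{using} converts this back to the conditional statement \eqref{uniform} with $\beta := \frac1\gamma\int_0^\gamma \beta_\gamma Q_{0,s}^{(B)}\,ds$, where the extra $\sup_\mu$ is preserved because \eqref{using} is uniform in $\mu$ and because $\mu \mapsto \eta_0^A * \mu$ maps $\cM_1(E\setminus A_0)$ into $\cM_1(E_0)$.
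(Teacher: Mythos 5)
Your sketch is, from start to finish, a proof strategy for Theorem \ref{qsd}: it begins from the total-variation hypothesis \eqref{condition} and works through the $Q$-process reduction and Corollary \ref{corollary}. That part is essentially the paper's own approach. But Corollary \ref{qsd-cor} is not Theorem \ref{qsd}: its hypothesis is the apparently weaker statement that only the survival probabilities converge,
$$\P_{s+k \gamma,x_s}[\tau_A > s +(k+n) \gamma] \underset{k \to \infty}{\longrightarrow} \P_{s,x_s}[\tau_B > s + n \gamma],$$
together with the extra structural assumption $B_t \subset A_t$. Nowhere do you derive the TV convergence \eqref{condition} from these hypotheses, and nowhere do you use $B_t \subset A_t$. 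That reduction is the actual content of the corollary, and it is precisely what the paper's Remark preceding it supplies.

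The missing step is short but it is the whole point. Since $B_t \subset A_t$ for all $t$, $\tau_B \leq \tau_A$, so $\{\tau_B > T\} \subset \{\tau_A > T\}$. Using the $\gamma$-periodicity of $(B_t)$ to rewrite $\P_{s,x_s}[X_{s+n\gamma} \in \cdot,\ \tau_B > s+N\gamma] = \P_{s+k\gamma,x_s}[X_{s+(k+n)\gamma} \in \cdot,\ \tau_B > s+(k+N)\gamma]$, the difference of the two sub-probability measures in \eqref{condition} becomes
$$\P_{s+k\gamma,x_s}\bigl[X_{s+(k+n)\gamma} \in \cdot,\ \tau_B \leq s+(k+N)\gamma < \tau_A\bigr],$$
a nonnegative measure whose total mass is exactly $\P_{s+k\gamma,x_s}[\tau_A > s+(k+N)\gamma] - \P_{s,x_s}[\tau_B > s+N\gamma]$. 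Hence the TV norm in \eqref{condition} is bounded by (in fact equal to) the absolute difference of the survival probabilities, and the corollary's hypothesis is equivalent to \eqref{condition}. Once you add this observation, your argument for Theorem \ref{qsd} applies and the corollary follows. Without it, what you have written proves a different statement from the one asked.
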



\begin{proof}[Proof of Theorem \ref{qsd}]
Since $\{(X_t)_{t \geq 0}, (B_t)_{t \geq 0}\}$ satisfies Assumption (A') and $(B_t)_{t \geq 0}$ is a periodic boundary, we already know by \cite[Theorem 2]{ocafrain2018} that, for any initial distribution $\mu$, $t \mapsto \frac{1}{t} \int_0^t \P_{0,\mu}[X_s \in \cdot | \tau_B > t]ds$ converges weakly to a quasi-ergodic distribution $\beta$. 

The main idea of this proof is to apply Corollary \ref{corollary}. Since $\{(X_t)_{t \geq 0}, (A_t)_{t \geq 0}\}$ and $\{(X_t)_{t \geq 0}, (B_t)_{t \geq 0}\}$ satisfy Assumption (A'), \cite[Theorem 1]{ocafrain2018} implies that there exist two families of probability measures $(\Q^A_{s,x})_{s \geq 0, x \in E \setminus A_s}$ and $(\Q^B_{s,x})_{s \geq 0, x \in E \setminus B_s}$ such that, for any $s \leq t$, $x \in E \setminus A_s$, $y \in E \setminus B_s$ and $\Gamma \in \cF_{s,t}$,
$$\Q_{s,x}^A[\Gamma] = \lim_{T \to \infty} \P_{s,x}[\Gamma | \tau_A > T],~~\text{ and } ~~\Q_{s,y}^B[\Gamma] = \lim_{T \to \infty} \P_{s,y}[\Gamma | \tau_B > T].$$
In particular, the quasi-ergodic distribution $\beta$ is the limit of $t \mapsto \frac{1}{t} \int_0^t \Q^B_{0, \mu}[X_s \in \cdot]ds$, when $t$ goes to infinity (see \cite[Theorem 5]{ocafrain2018}).
Also, by \cite[Theorem 1]{ocafrain2018}, there exists a constant $C > 0$ and $\kappa > 0$ such that, for any $s \leq t \leq T$, for any $x \in E \setminus A_{s}$, 
$$\| \Q^A_{s,x}[X_t \in \cdot] - \P_{s, x}[X_{t} \in \cdot | \tau_A > T] \|_{TV} \leq C e^{-\kappa (T-t)},$$
and for any $x \in E \setminus B_s$, $$\| \Q^B_{s,x}[X_{t} \in \cdot] - \P_{s, x}[X_{t} \in \cdot | \tau_B > T] \|_{TV} \leq C e^{-\kappa (T-t)}.$$
Moreover, for any $s \leq t \leq T$ and  $x \in E'_s$,
\begin{align}
\| \P_{s,x}[X_{t} \in \cdot | \tau_A > T] - \P_{s,x}[X_{t} &\in \cdot | \tau_B > T] \|_{TV} \notag \\&
=\left|\left| \frac{\P_{s,x}[X_{t} \in \cdot, \tau_A > T]}{\P_{s,x}[\tau_A > T]} - \frac{\P_{s,x}[X_{t} \in \cdot, \tau_B > T]}{\P_{s,x}[\tau_B > T]} \right| \right|_{TV} \notag \\ &= \left|\left|  \frac{\P_{s,x}(\tau_B > T)}{\P_{s,x}(\tau_A > T)}\frac{\P_{s,x}[X_{t} \in \cdot, \tau_A > T]}{\P_{s,x}[\tau_B > T]} - \frac{\P_{s,x}[X_{t} \in \cdot, \tau_B > T]}{\P_{s,x}[\tau_B > T]} \right| \right|_{TV} \notag \\
&\leq  \left|\left| \frac{\P_{s,x}(\tau_B > T)}{\P_{s,x}(\tau_A > T)}\frac{\P_{s,x}[X_{t} \in \cdot, \tau_A > T]}{\P_{s,x}[\tau_B > T]} - \frac{\P_{s,x}[X_{t} \in \cdot, \tau_A > T]}{\P_{s,x}[\tau_B > T]} \right| \right|_{TV} \notag \\&~~~~~~+ \left|\left|  \frac{\P_{s,x}[X_{t} \in \cdot, \tau_A > T]}{\P_{s,x}[\tau_B > T]} - \frac{\P_{s,x}[X_{t} \in \cdot, \tau_B > T]}{\P_{s,x}[\tau_B > T]}  \right| \right|_{TV} \notag \\
&\leq \frac{|\P_{s,x}(\tau_B > T) - \P_{s,x}(\tau_A > T)|}{\P_{s,x}(\tau_B > T)} + \frac{\|\P_{s,x}[X_t \in \cdot, \tau_A > T] - \P_{s,x}[X_t \in \cdot, \tau_B > T]\|_{TV}}{\P_{s,x}[\tau_B > T]} \notag \\
&\leq 2 \frac{\|\P_{s,x}[X_t \in \cdot, \tau_A > T] - \P_{s,x}[X_t \in \cdot, \tau_B > T]\|_{TV}}{\P_{s,x}[\tau_B > T]},
\label{degueu}
\end{align}
since $|\P_{s,x}(\tau_B > T) - \P_{s,x}(\tau_A > T)| \leq \|\P_{s,x}[X_t \in \cdot, \tau_A > T] - \P_{s,x}[X_t \in \cdot, \tau_B > T]\|_{TV}$.
Then, we obtain for any $s \leq t \leq T$ and $x \in E'_s$, 
\begin{equation}
    \|\Q_{s,x}^A[X_{t} \in \cdot] - \Q_{s,x}^B[X_{t} \in \cdot] \|_{TV} \leq 2 C e^{-\kappa (T - t)} + 2 \frac{\|\P_{s,x}[X_t \in \cdot, \tau_A > T] - \P_{s,x}[X_t \in \cdot, \tau_B > T]\|_{TV}}{\P_{s,x}[\tau_B > T]}.
\label{control}
\end{equation}
The condition \eqref{condition} implies the existence of $x_s \in E_s$ such that, for any $n \leq N$, for all $k \in \Z_+$,
$$\lim_{k \to \infty} \|\P_{s+k \gamma, x_s}[X_{s+(k+n)\gamma} \in \cdot, \tau_A > s+(k+N)\gamma] - \P_{s, x_s}[X_{s+n\gamma} \in \cdot, \tau_B > s+N\gamma]\|_{TV} = 0,$$
which implies by \eqref{control} that, for any $n \leq N$,
$$\limsup_{k \to \infty}  \|\Q_{s+k\gamma,x_s}^A[X_{s+(k+n)\gamma} \in \cdot] - \Q_{s+k\gamma,x_s}^B[X_{s+(k+n)\gamma} \in \cdot] \|_{TV} \leq 2Ce^{-\kappa \gamma (N-n)}.$$
Now, letting $N \to \infty$, for any $n \in \Z_+$
$$\lim_{k \to \infty}  \|\Q_{s+k\gamma,x_s}^A[X_{s+(k+n)\gamma} \in \cdot] - \Q_{s+k\gamma,x_s}^B[X_{s+(k+n)\gamma} \in \cdot] \|_{TV} = \lim_{k \to \infty} \| \Q_{s+k\gamma,x_s}^A(X_{s+(k+n)\gamma}\in \cdot)-\Q_{s,x_s}^B(X_{s+n\gamma}\in \cdot)\|_{TV}= 0.$$
In other words, the semigroup $(Q^A_{s,t})_{s \leq t}$ defined by 
$$Q_{s,t}^Af(x) := \E_{s,x}^{\Q^A}(f(X_t)),~~~~\forall s \leq t, \forall f \in \cB(E \setminus A_t), \forall x \in E \setminus A_s,$$
 is asymptotically periodic (according to the Definition \ref{asymptotic-periodicity}, with $\psi_s = \tilde{\psi}_s = 1$ for all $s \geq 0$), associated to the auxiliary semigroup $(Q^B_{s,t})_{s \leq t}$ defined by 
$$Q_{s,t}^Bf(x) := \E_{s,x}^{\Q^B}(f(X_t)),~~~~\forall s \leq t, \forall f \in \cB(E \setminus B_t), \forall x \in E \setminus B_s.$$ Moreover, assumptions (A') satisfied for $\{(X_t)_{t \geq 0}, (A_t)_{t \geq 0}\}$ and $\{(X_t)_{t \geq 0}, (B_t)_{t \geq 0}\}$ imply that Doeblin condition  holds for these two $Q$-processes. As a matter of fact, by the Markov property, for all $s \leq t \leq T$ and $x \in E \setminus A_s$,
\begin{align}
 \P_{s,x}(X_t \in \cdot | \tau_A > T) &= \E_{s,x}\left[\1_{X_t \in \cdot, \tau_A > t} \frac{\P_{t,X_t}(\tau_A > T)}{\P_{s,\mu}(\tau_A > T)}\right] \notag \\
&= \E_{s,x}\left[\frac{\1_{X_t \in \cdot, \tau_A > t}}{\P_{s,x}(\tau_A > t)} \frac{\P_{t,X_t}(\tau_A > T)}{\P_{t,\phi_{t,s}(\mu)}(\tau_A > T)}\right] \notag \\
&= \E_{s,x}\left[\1_{X_t \in \cdot} \frac{\P_{t,X_t}(\tau_A > T)}{\P_{t,\phi_{t,s}(\delta_x)}(\tau_A > T)} \middle| \tau_A > t\right]. \label{pour-doeblin}\end{align}
By $(A'_1)$, for all $s \geq 0$, $T \geq s+t_0$ and $x \in E \setminus A_s$,
$$\E_{s,x}\left[\1_{X_{s+t_0} \in \cdot} \frac{\P_{s+t_0,X_{s+t_0}}(\tau_A > T)}{\P_{s+t_0,\phi_{s+t_0,s}(\delta_x)}(\tau_A > T)} \middle| \tau_A > s+t_0\right] \geq c_1 \int_{\cdot} \nu_{s+t_0}(dy) \frac{\P_{s+t_0,y}(\tau_A > T)}{\P_{s+t_0,\phi_{s+t_0,s}(\delta_x)}(\tau_A > T)},$$
that is to say by \eqref{pour-doeblin} that
$$\P_{s,x}(X_{s+t_0} \in \cdot | \tau_A > T) \geq  c_1 \int_{\cdot} \nu_{s+t_0}(dy) \frac{\P_{s+t_0,y}(\tau_A > T)}{\P_{s+t_0,\phi_{s+t_0,s}(\delta_x)}(\tau_A > T)}.$$
Letting $T \to \infty$ in this last inequality and using \cite[Proposition 3.1]{CV2016}, for all $s \geq 0$ and $x \in E \setminus A_s$,
$$\Q_{s,x}^A(X_{s+t_0} \in \cdot) \geq c_1 \int_\cdot \nu_{s+t_0}(dy) \frac{\eta_{s+t_0}(y)}{\phi_{s+t_0,s}(\delta_x)(\eta_{s+t_0})}.$$ 
The measure $ \int_\cdot \nu_{s+t_0}(dy) \frac{\eta_{s+t_0}(y)}{\phi_{s+t_0,s}(\delta_x)(\eta_{s+t_0})}$ is then a positive measure whose the mass is lower-bounded by $c_2$ by $(A_2')$, since for all $s \geq 0$ and $T \geq s + t_0$,
$$\int_{E \setminus A_{s+t_0}} \nu_{s+t_0} \frac{\P_{t,x}(\tau_A > T)}{\P_{t,\phi_{t,s}(\delta_x)}(\tau_A > T)} \geq c_2.$$
This proves therefore a Doeblin condition for the semigroup $(Q_{s,t}^A)_{s \leq t}$. The same reasoning applies to show also a Doeblin condition for the semigroup $(Q_{s,t}^B)_{s \leq t}$. 
  Then, using \eqref{using} then Corollary \ref{corollary},\begin{equation*}
\lim_{t \to \infty} \frac{1}{t} \int_0^t \P_{0,\mu}[X_s \in \cdot | \tau_A > t]ds = \lim_{t \to \infty} \frac{1}{t} \int_0^t \Q_{0,\eta_0*\mu}^A(X_s \in \cdot)ds = \lim_{t \to \infty} \frac{1}{t} \int_0^t \Q^B_{0,\eta_0*\mu}[X_s \in \cdot]ds = \beta,\end{equation*}
where the limits refer to the convergence in total variation and hold uniformly in the initial measure.

For any $\mu \in \cM_1(E \setminus A_0)$, $f \in \cB_1(E)$ and $t \geq 0$, 
$$\E_{0,\mu}\left[\left| \frac{1}{t} \int_0^t f(X_s)ds \right|^2\middle| \tau_A > t\right] = \frac{2}{t^2} \int_0^t \int_s^t \E_{0,\mu}[f(X_s)f(X_u) | \tau_A > t]duds.$$
Then, by \cite[Theorem 1]{ocafrain2018}, for any $s \leq u \leq t$, for any $\mu \in \cM_1(E \setminus A_0)$ and $f \in \cB(E)$,
$$\left|  \E_{0,\mu}[f(X_s)f(X_u) | \tau_A > t] - \E^{\Q^A}_{0,\eta_0*\mu}[f(X_s)f(X_u)]\right| \leq C \|f\|_\infty e^{-\kappa (t-u)},$$
where the expectation $\E^{\Q^A}_{0,\eta_0*\mu}$ is associated to the probability measure $\Q_{0,\eta_0*\mu}^A$. 
Hence, for any $\mu \in \cM_1(E \setminus A_0)$, $f \in \cB_1(E)$ and $t > 0$,
\begin{align*}
    \left| \E_{0,\mu}\left[\left| \frac{1}{t} \int_0^t f(X_s)ds - \beta(f) \right|^2\middle| \tau_A > t\right] - \E_{0,\eta_0*\mu}^{\Q^A}\left[\left| \frac{1}{t} \int_0^t f(X_s)ds - \beta(f) \right|^2\right] \right| &\leq \frac{4C}{t^2} \int_0^t \int_s^t e^{-\kappa(t-u)}duds \\
    &\leq \frac{4C}{\kappa t} - \frac{4C (1 - e^{-\kappa t})}{\kappa^2 t^2}.
\end{align*}
Moreover, since $(Q^A_{s,t})_{s \leq t}$ is asymptotically periodic in total variation and satisfies the Doeblin condition, like $(Q^B_{s,t})_{s \leq t}$, Corollary \ref{corollary} implies that
$$\sup_{\mu \in \cM_1(E \setminus A_0)} \sup_{f \in \cB_1(E)} \E_{0,\eta_0*\mu}^{\Q^A}\left[\left| \frac{1}{t} \int_0^t f(X_s)ds - \beta(f) \right|^2\right] \underset{t \to \infty}{\longrightarrow} 0.$$
Then
$$\sup_{\mu \in \cM_1(E \setminus A_0)} \sup_{f \in \cB_1(E)} \E_{0,\mu}\left[\left| \frac{1}{t} \int_0^t f(X_s)ds - \beta(f) \right|^2\middle| \tau_A > t\right] \underset{t \to \infty}{\longrightarrow} 0.$$
\end{proof}

\begin{remark}
It seems that Assumption (A') can be weaken by a conditional version of Assumption \ref{lyap-def}. In particular, such conditions can be derived from the Assumption (F) presented in \cite{CV2017c}, as it will be shown later by the preprint \cite{COV} in preparation.   
\end{remark}

\section{Examples}

\subsection{Asymptotically periodic Ornstein-Uhlenbeck processes}

Let $(X_t)_{t \geq 0}$ be a time-inhomogeneous diffusion process on $\R$ satisfying the following stochastic differential equation
$$dX_t = dW_t - \lambda(t) X_t dt,$$
where $(W_t)_{t \geq 0}$ is a one-dimensional Brownian motion and $\lambda : [0, \infty) \to [0, \infty)$ is a function such that  $$\sup_{t \geq 0} |\lambda(t)| < + \infty$$ and such that there exists $\gamma > 0$ such that $$\inf_{s \geq 0} \int_s^{s+\gamma} \lambda(u)du > 0.$$ By Itô's lemma, for any $s \leq t$,
$$X_t = e^{-\int_s^t \lambda(u)du}\left[X_s + \int_s^t e^{\int_s^u \lambda(v)dv} dW_u\right].$$ 
In particular, denoting $(P_{s,t})_{s \leq t}$ the semigroup associated to $(X_t)_{t \geq 0}$, for any $f \in \cB(\R)$, $t \geq 0$ and $x \in \R$,
$$P_{s,t}f(x) = \E\left[f\left(e^{-\int_s^t \lambda(u)du}x + e^{-\int_s^t \lambda(u)du} \sqrt{\int_s^t e^{2\int_s^u \lambda(v)dv}du} \times \cN(0,1)\right)\right],$$
where $\cN(0,1)$ denotes a standard Gaussian variable. 

\begin{theorem}
Assume that there exists a $\gamma$-periodic function $g$, bounded on $\R$, such that $\lambda \sim_{t \to \infty} g$. Then the assumptions of Theorem \ref{thm} hold.
\end{theorem}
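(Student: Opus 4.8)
The plan is to verify, one at a time, the three hypotheses of Theorem~\ref{thm}: that the semigroup $(P_{s,t})_{s\leq t}$ is asymptotically $\gamma$-periodic in the sense of Definition~\ref{asymptotic-periodicity}, that both $(P_{s,t})_{s\leq t}$ and its auxiliary semigroup satisfy the Lyapunov/minorization Assumption~\ref{lyap-def}, and that the associated Lyapunov functions converge pointwise. Throughout I would write $M := \sup_{t\geq 0}\lambda(t) < +\infty$ and $\delta_0 := \inf_{s\geq 0}\int_s^{s+\gamma}\lambda(u)du > 0$, and record the explicit transition data: under $\P_{s,x}$ one has $X_t \sim \cN(m_{s,t}x,\sigma_{s,t}^2)$ with $m_{s,t} = e^{-\int_s^t\lambda(u)du} \in (0,1]$ and $\sigma_{s,t}^2 = \int_s^t e^{-2\int_u^t\lambda(v)dv}du$. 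The \emph{auxiliary semigroup} $(Q_{s,t})_{s\leq t}$ is taken to be the Ornstein--Uhlenbeck semigroup driven by $g$ in place of $\lambda$; since $g$ is $\gamma$-periodic, $\int_{s+\gamma}^{t+\gamma}g = \int_s^t g$, so $Q_{s+\gamma,t+\gamma}=Q_{s,t}$, and moreover $\int_s^{s+\gamma}g$ equals the $s$-independent constant $G := \int_0^\gamma g$. From $\lambda\sim g$ (hence $\lambda-g\to 0$, using that $g$ is bounded) one gets $\int_s^{s+\gamma}\lambda \to G$, and since every term of this sequence is $\geq\delta_0$, this forces $G\geq\delta_0>0$.

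The key elementary estimate, carried out once for $\lambda$ and once for $g$, is a two-sided bound on the variances uniform in the starting time: splitting $\int_u^t\lambda$ into contributions of full periods, each $\geq\delta_0$, gives $\sigma_{s,t}^2 \leq \sum_{j\geq 0}\gamma e^{-2j\delta_0} = \gamma/(1-e^{-2\delta_0}) =: \sigma_+^2$ for all $s\leq t$, while $\sigma_{s,t}^2 \geq \int_s^t e^{-2M(t-u)}du = (1-e^{-2M(t-s)})/(2M) \geq \sigma_-^2 > 0$ whenever $t-s\geq\gamma$ (and similarly $\tilde\sigma_\pm^2$ with $G$ and $\|g\|_\infty$ for $Q$). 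With this I would take $t_1 := \gamma$, $n_0 := 1$, time-independent Lyapunov functions $\psi_s := \psi$ with $\psi(x) := 1+x^2$, and $\tilde\psi_s := \psi$. Then $P_{s,s+\gamma}\psi(x) = 1 + m_{s,s+\gamma}^2 x^2 + \sigma_{s,s+\gamma}^2 \leq e^{-2\delta_0}\psi(x) + b$ with $b := 1-e^{-2\delta_0}+\sigma_+^2$; choosing $R$ with $e^{-2\delta_0}+b/R<1$, setting $K := \{\psi\leq R\}$ (a fixed closed ball of radius $\rho := \sqrt{R-1}$), and absorbing $b$ into $\theta\psi$ off $K$ in the usual way, yields Assumption~\ref{lyap-def}(ii); while $P_{s,s+t}\psi(x) \leq (1+\sigma_+^2)\psi(x)$ for $t\in[0,\gamma)$ yields (iii). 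For (i): for $x\in K$ and $n\geq 1$, $\delta_x P_{s,s+n\gamma} = \cN(mx,\sigma^2)$ with $|mx|\leq\rho$ and $\sigma^2\in[\sigma_-^2,\sigma_+^2]$, whose density is bounded below on $K$ by $c_0 := (2\pi\sigma_+^2)^{-1/2}\exp(-(2\rho)^2/(2\sigma_-^2))$, so $\delta_x P_{s,s+n\gamma} \geq c\,\nu$ with $\nu := \mathrm{Unif}(K)$ and $c := 2\rho c_0$. The same computation with $G$, $\|g\|_\infty$, $\tilde\sigma_\pm^2$ gives Assumption~\ref{lyap-def} for $(Q_{s,t})_{s\leq t}$, and Hypothesis~2 of Theorem~\ref{thm} is immediate since all these functions equal $\psi$.

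It then remains to check asymptotic $\gamma$-periodicity. I would fix $s\in[0,\gamma)$ and take $x_s := 0$, which lies in $\bigcup_k\bigcap_{l\geq k}E_{s+l\gamma}\cap F_s = \R$, so condition~1 of Definition~\ref{asymptotic-periodicity} is trivial and only \eqref{asympt-hom} is at stake. Here $\delta_0 P_{s+k\gamma,s+(k+n)\gamma} = \cN(0,\sigma_k^2)$ where, after the substitution $v\mapsto v+k\gamma$,
$$\sigma_k^2 = \int_s^{s+n\gamma}\exp\Big(-2\int_w^{s+n\gamma}\lambda(v+k\gamma)\,dv\Big)dw \underset{k\to\infty}{\longrightarrow} \int_s^{s+n\gamma}\exp\Big(-2\int_w^{s+n\gamma}g(v)\,dv\Big)dw = \tilde\sigma_{s,s+n\gamma}^2,$$
by dominated convergence, using $\lambda(\cdot+k\gamma)\to g$ pointwise (because $\lambda-g\to 0$ and $g$ is $\gamma$-periodic) together with $\|\lambda\|_\infty<\infty$; and $\delta_0 Q_{s,s+n\gamma} = \cN(0,\tilde\sigma_{s,s+n\gamma}^2)$. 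Since $\tilde\sigma_{s,s+n\gamma}^2 \geq \tilde\sigma_-^2>0$, the $\psi$-weighted measures $\delta_0 P_{s+k\gamma,s+(k+n)\gamma}[\psi\times\cdot]$ and $\delta_0 Q_{s,s+n\gamma}[\psi\times\cdot]$ have densities $(1+y^2)$ times a centred Gaussian density of variance $\sigma_k^2$, resp.\ $\tilde\sigma_{s,s+n\gamma}^2$; these converge pointwise and their total masses $1+\sigma_k^2$ converge to $1+\tilde\sigma_{s,s+n\gamma}^2$, so Scheffé's lemma gives $L^1(dy)$-convergence, i.e.\ the total variation distance in \eqref{asympt-hom} tends to $0$. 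Since $\tilde\psi_{s+\gamma}=\tilde\psi_s$ trivially, Definition~\ref{asymptotic-periodicity} is satisfied, and hence all hypotheses of Theorem~\ref{thm} hold.

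The main obstacle is the uniform-in-time two-sided control of the variances $\sigma_{s,t}^2$: the upper bound $\sigma_+^2$ genuinely requires the hypothesis $\inf_s\int_s^{s+\gamma}\lambda>0$, via summing a geometric series of per-period contributions, and the same estimate must be re-run for $g$ — which is why one first establishes $G=\int_0^\gamma g>0$. Once these bounds and the pointwise limit $\lambda(\cdot+k\gamma)\to g$ are secured, absorbing the Lyapunov constant into a sublevel set, the Gaussian density lower bound on a compact set, and the Scheffé argument for the weighted total-variation convergence are all routine.
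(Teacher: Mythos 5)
Your proof is correct and covers all three hypotheses of Theorem~\ref{thm}, but it takes a genuinely different route from the paper in two places, both of which are worth comparing.

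For the minorization (Assumption~\ref{lyap-def}~(i)), the paper proves a stand-alone lemma (Lemma~\ref{gaussian}): for the family of Gaussians with mean in $[-a,a]$ and standard deviation in $[b_-,b_+]$, a common minorizing measure is obtained by taking (a normalization of) $x \mapsto e^{-(x-a)^2/(2b_-^2)} \wedge e^{-(x+a)^2/(2b_-^2)}$, which gives a minorization on \emph{all} of $\R$. You instead bound the Gaussian density from below on the compact sublevel set $K=\{\psi\le R\}$ and take $\nu=\mathrm{Unif}(K)$. Both are valid, since Assumption~\ref{lyap-def}~(i) only requires the bound for $x\in K_s$ and places no constraint on the support of $\nu_{s+nt_1}$; yours is slightly more ad hoc, the paper's is a reusable lemma.

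For the asymptotic periodicity \eqref{asympt-hom}, the two arguments diverge more substantially. The paper first invokes a quantitative total-variation bound for two Gaussians (Devroye et al.) to get TV convergence of $\delta_x P_{s+k\gamma,\,t+k\gamma}$ to $\delta_x Q_{s,t}$ for every $x$, and then upgrades to $\psi$-distance by a uniform-integrability argument (control of $\delta_x P[\psi^2]$, truncation to a compact $K_\epsilon$, borrowed from Hening--Nguyen). You instead fix $x_s=0$ so that all the laws in sight are \emph{centred} Gaussians, observe that the $\psi$-weighted densities $(1+y^2)\varphi_{\sigma_k}(y)$ converge pointwise with converging total masses $1+\sigma_k^2\to 1+\tilde\sigma^2$, and apply Scheffé's lemma (in its form for non-negative $L^1$ functions) to get the weighted-TV convergence directly. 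This is cleaner and more elementary for this specific Gaussian example; note, however, that it exploits the freedom to pick $x_s=0$ (which kills the means), whereas the paper establishes the stronger statement that the convergence holds for \emph{every} $x$ — which is not needed for Definition~\ref{asymptotic-periodicity}, so nothing is lost.

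Two micro-remarks. Your constant $b$ in the drift bound should be $1+\sigma_+^2$ rather than $1-e^{-2\delta_0}+\sigma_+^2$, since $1-m_{s,s+\gamma}^2\le 1$; this does not affect anything. And when you assert $\lambda-g\to 0$ from $\lambda\sim g$ and $g$ bounded, you (like the paper) are implicitly using that $g$ does not vanish along the sequence of times in question; this is harmless but worth keeping in mind.
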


\begin{proof}
In our case, the auxiliary semigroup $(Q_{s,t})_{s \leq t}$ of Definition \ref{asymptotic-periodicity} will be defined as follows: 
 for any $f \in \cB(\R)$, $t \geq 0$ and $x \in \R$,
$$Q_{s,t}f(x) = \E\left[f\left(e^{-\int_s^t g(u)du}x + e^{-\int_s^t g(u)du} \sqrt{\int_s^t e^{2\int_s^u g(v)dv}du} \times \cN(0,1)\right)\right].$$
In particular, the semigroup $(Q_{s,t})_{s \leq t}$ is associated to the process $(Y_t)_{t \geq 0}$ following 
$$dY_t = dW_t - g(t) Y_t dt.$$
We first remark that the function $\psi : x \mapsto 1 + x^2$ is a Lyapunov function for $(P_{s,t})_{s \leq t}$ and $(Q_{s,t})_{s \leq t}$. In fact, for any $s \geq 0$ and $x \in \R$,
\begin{align*}
    P_{s,s+\gamma} \psi(x) &= 1 + e^{-2 \int_s^{s+\gamma} \lambda(u)du}x^2 + e^{-2\int_s^{s+\gamma} \lambda(u)du} \int_s^{s+\gamma} e^{2\int_s^u \lambda(v)dv}du  \\
    &= e^{-2\int_s^{s+\gamma} \lambda(u)du} \psi(x) + 1 - e^{-2\int_s^{s+\gamma} \lambda(u)du} +  e^{-2\int_s^{s+\gamma} \lambda(u)du} \int_s^{s+\gamma} e^{2\int_s^u \lambda(v)dv}du \\
    &\leq e^{-2 \gamma c_{\inf}} \psi(x) + C, 
\end{align*}
where $C \in (0,+\infty)$ and $c_{\inf} := \inf_{t \geq 0} \frac{1}{\gamma} \int_t^{t+\gamma}\lambda(u)du > 0$. 
Taking $\theta \in (e^{-2 \gamma c_{\inf}},1)$, there exists a compact set $K$ such that, for any $s \geq 0$,
$$P_{s,s+\gamma} \psi(x) \leq \theta \psi(x) + C \1_K(x).$$ Moreover, for any $s \geq 0$ and $t \in [0, \gamma)$, the function $P_{s,s+t}\psi/\psi$ is upper-bounded uniformly in $s$ and $t$. It remains therefore to prove Assumption \ref{lyap-def} (i) for $(P_{s,t})_{s \leq t}$, which is a consequence of the following lemma.
\begin{lemma}
\label{gaussian}
For any $a,b_{-},b_{+} > 0$, define the subset $\cC(a,b_{-},b_{+}) \subset \cM_1(\R)$ as 
$$\cC(a,b_{-},b_{+}) := \{ \cN(m,\sigma) : m \in [-a,a], \sigma \in [b_{-},b_{+}]\}.$$
Then, for any $a,b_{-},b_{+} > 0$, there exists a probability measure $\nu$ and a constant $c > 0$ such that, for any $\mu \in \cC(a,b_{-},b_{+})$,
$$\mu \geq c \nu.$$
\end{lemma}
The proof of this lemma is postponed after the end of this proof. 

Since $\lambda \sim_{t \to \infty} g$ and these two functions are bounded on $\R_+$, Lebesgue's dominated convergence theorem implies that, for all $s \leq t$, 
$$\left| \int_{s+k\gamma}^{t+k\gamma} \lambda(u)du - \int_s^{t} g(u)du\right| \underset{k \to \infty}{\longrightarrow} 0.$$
In the same way, for all $s \leq t$, 
$$\int_{s+k\gamma}^{t+k\gamma} e^{2\int_{s+k\gamma}^u \lambda(v)dv}du \underset{k \to \infty}{\longrightarrow} \int_s^{t} e^{2\int_s^u g(v)dv}du.$$
Hence, for any $s \leq t$, 
$$e^{-\int_{s+k\gamma}^{t+k\gamma} \lambda(u)du} \underset{k \to \infty}{\longrightarrow} e^{-\int_s^{t} g(u)du},$$
and $$e^{-\int_{s+k \gamma}^{t+k\gamma} \lambda(u)du} \sqrt{\int_{s+k\gamma}^{t+k\gamma} e^{2\int_{s+k\gamma}^u \lambda(v)dv}du} \underset{k \to \infty}{\longrightarrow}  e^{-\int_s^{t} g(u)du} \sqrt{\int_s^{t} e^{2\int_s^u g(v)dv}du}.$$
Using \cite[Theorem 1.3.]{devroye2018total}, for any $x \in \R$, 
\begin{equation}
\label{convergence-tv}
\| \delta_x P_{s+k\gamma, t+k\gamma} - \delta_x Q_{s+k\gamma, t+k\gamma} \|_{TV} \underset{k \to \infty}{\longrightarrow} 0.\end{equation}
To deduce the convergence in $\psi$-distance, we will inspire from the proof of \cite[Lemma 3.1]{hening2018stochastic}.
Since the variances are uniformly bounded in $k$ (for $s \leq t$ fixed), there exists $H > 0$ such that, for any $k \in \N$ and $s \leq t$, 
\begin{equation}
\label{H}
\delta_x P_{s+k\gamma, t+k\gamma}[\psi^2] \leq H~~\text{ and }~~\delta_x Q_{s, t}[\psi^2] \leq H.\end{equation}
Since $\lim_{|x| \to \infty} \frac{\psi(x)}{\psi^2(x)} = 0$, for any $\epsilon > 0$, there exists $l_\epsilon > 0$ such that, for any function $f$ such that $|f| \leq \psi$ and for any $|x| \geq l_\epsilon$,
$$|f(x)| \leq \frac{\epsilon \psi(x)^2}{H}.$$
This implies with \eqref{H} that, denoting $K_\epsilon := [-l_\epsilon, l_\epsilon]$, for any $k \in \Z_+$, $f$ such that $|f| \leq \psi$ and $x \in \R$,
$$\delta_x P_{s+k \gamma, t+k\gamma}[f \1_{K_\epsilon^c}] \leq \epsilon~~\text{ and }~~\delta_x Q_{s, t}[f \1_{K_\epsilon^c}] \leq \epsilon.$$
Then, for any $k \in \Z_+$ and $f$ such that $|f| \leq \psi$, 
\begin{align}
    | \delta_x  P_{s+k \gamma, t+k\gamma}f -  \delta_x Q_{s, t}f| &\leq 2 \epsilon +  | \delta_x  P_{s+k \gamma, t+k\gamma}[f \1_{K_\epsilon}] -  \delta_x Q_{s, t}[f \1_{K_\epsilon}]| \\&\leq 2 \epsilon + (1+l_\epsilon^2) \| \delta_x P_{s+k\gamma, t+k\gamma} - \delta_x Q_{s, t} \|_{TV}
\end{align}
Hence, \eqref{convergence-tv} implies that, for $k$ large enough, for any $f$ bounded by $\psi$, 
 \begin{align}
    | \delta_x  P_{s+k \gamma, t+k\gamma}f -  \delta_x Q_{s, t}f| &\leq 3 \epsilon,
\end{align}
implying that 
$$\| \delta_x  P_{s+k \gamma, t+k\gamma} -  \delta_x Q_{s, t} \|_{\psi} \underset{k \to \infty}{\longrightarrow} 0.$$
\end{proof}
We now prove Lemma \ref{gaussian}. 
\begin{proof}[Proof of Lemma \ref{gaussian}]
Defining $$f_\nu(x) := e^{-\frac{(x-a)^2}{2 {b_-}^2}} \land e^{-\frac{(x+a)^2}{2 {b_-}^2}},$$
we conclude easily that, for any $m \in [-a,a]$ and $\sigma \geq b_-$, for any $x \in \R$,
$$e^{-\frac{(x-m)^2}{2 \sigma^2}} \geq f_\nu(x).$$
Imposing moreover that $\sigma \leq b_+$, one has
$$\frac{1}{\sqrt{2 \pi} \sigma} e^{-\frac{(x-m)^2}{2 \sigma^2}} \geq \frac{1}{\sqrt{2 \pi} b_+} f_\nu(x),$$
which concludes the proof. 
\end{proof}

\subsection{Quasi-ergodic distribution for Brownian motion absorbed by an asymptotically periodic moving boundary}

Let  $(W_t)_{t \geq 0}$ be a one-dimensional Brownian motion and $h$  be a $\cC^1$-function such that $$h_{\min} := \inf_{t \geq 0} h(t) > 0,~~~~\text{ and }~~~~h_{\max} := \sup_{t \geq 0} h(t) < + \infty.$$
We assume also that 
$$-\infty < \inf_{t \geq 0} h'(t) \leq \sup_{t \geq 0} h'(t) < + \infty.$$
Denote by 
$$\tau_h := \inf\{t \geq 0 : |W_t| \geq h(t)\}.$$
Since $h$ is continuous, the hitting time $\tau_h$ is a stopping time with respect to the natural filtration of $(W_t)_{t \geq 0}$. Moreover, since $\sup_{t \geq 0} h(t) < + \infty$ and $\inf_{t \geq 0} h(t) > 0$,
$$\P_{s,x}[\tau_h < + \infty] = 1 ~~\text{ and }~~ \P_{s,x}[\tau_h > t] > 0,~~~~\forall s \leq t, \forall x \in [-h(s),h(s)].$$
The main assumption on the function $h$ is the existence of a $\gamma$-periodic function $g$ such that $h(t) \leq g(t)$, for any $t \geq 0$, and such that
$$h \sim_{t \to \infty} g,~~\text{and}~~h' \sim_{t \to \infty} g'.$$
Similarly to $\tau_h$, denote 
$$\tau_g := \inf\{t \geq 0 : |W_t| = g(t)\}.$$
Finally, let us assume that there exists $n_0 \in \N$ such that, for any $s \geq 0$, 
\begin{equation}
    \label{argmin}
    \inf \{ u \geq s : h(u) = \inf_{t \geq s} h(t)\} - s \leq n_0 \gamma.
\end{equation}
This condition says that there exists $n_0 \in \N$ such that, for any time $s \geq 0$, the infimum of the function $h$ on the domain $[s, + \infty)$ is reached on the subset $[s, s + n_0 \gamma]$. \medskip

We first show the following proposition. 
\begin{proposition}
The Markov process $(W_{t})_{t \geq 0}$, considered as absorbed by $h$ or by $g$, satisfies Assumption (A').  
\end{proposition}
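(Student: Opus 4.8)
The plan is to verify the two conditions $(A'1)$ and $(A'2)$ for the Brownian motion absorbed by the boundary $\pm h(t)$ (the argument for $\pm g(t)$ being identical, since $g$ inherits the same uniform bounds as $h$: $0 < g_{\min} \le g(t) \le g_{\max} < \infty$ and bounded derivative, with the added benefit of periodicity). Throughout we write $E_s = [-h(s),h(s)]$ and use that, by scaling and the reflection principle, the transition density of Brownian motion killed at $\pm h(\cdot)$ has quantitative two-sided Gaussian-type bounds on any fixed time window, uniformly in the starting time $s$, precisely because $h$ is trapped between two positive constants and has bounded slope.

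For $(A'1)$, I would fix a time step $t_0 > 0$ (to be chosen) and seek a family of probability measures $(\nu_t)_{t \ge 0}$, supported on $[-h_{\min}/2, h_{\min}/2] \subset E_t$, together with a constant $c_1 > 0$ such that
$$\P_{s,x}[W_{s+t_0} \in \cdot \mid \tau_h > s+t_0] \ge c_1 \nu_{s+t_0}, \qquad \forall s \ge 0,\ \forall x \in E_s.$$
The numerator $\P_{s,x}[W_{s+t_0} \in A,\ \tau_h > s+t_0]$ is bounded below, for $A \subset [-h_{\min}/2,h_{\min}/2]$, by the probability that the Brownian path stays inside the fixed tube $(-h_{\min},h_{\min})$ on $[s,s+t_0]$ and lands in $A$ at the end; since $h \ge h_{\min}$ this event is contained in $\{\tau_h > s+t_0\}$, and its probability admits a lower bound of the form $\kappa_1 \,\mathrm{Leb}(A)$ with $\kappa_1 = \kappa_1(t_0, h_{\min}) > 0$ uniform in $s$ and $x \in E_s$ (one can force the path from any $x \in [-h_{\max},h_{\max}]$ into $[-h_{\min}/4,h_{\min}/4]$ within time $t_0/2$ while staying in the tube, then use the standard killed-density lower bound on the remaining interval). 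The denominator $\P_{s,x}[\tau_h > s+t_0]$ is at most $1$, so taking $\nu_t := \mathrm{Leb}(\cdot \cap [-h_{\min}/2,h_{\min}/2]) / h_{\min}$ and $c_1 := \kappa_1 h_{\min}$ gives $(A'1)$.

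For $(A'2)$ I need the Harnack-type inequality $\P_{s,\nu_s}[\tau_h > t] \ge c_2 \,\P_{s,x}[\tau_h > t]$ for all $s \le t$ and $x \in E_s$. The key point is a comparison at a single intermediate time: by the Markov property at time $s+t_0$,
$$\P_{s,x}[\tau_h > t] = \P_{s,x}[\tau_h > s+t_0]\, \E_{s,x}\!\big[\P_{s+t_0,W_{s+t_0}}[\tau_h > t] \,\big|\, \tau_h > s+t_0\big] \le \sup_{y \in E_{s+t_0}} \P_{s+t_0,y}[\tau_h > t],$$
while, from $(A'1)$,
$$\P_{s,\nu_s}[\tau_h > t] \ge \P_{s+t_0,\nu_{s+t_0}}[\tau_h > t]\cdot(\text{mass factor}) \ge c_1 \inf_{y}\big(\cdots\big),$$
so it suffices to control $\sup_y \P_{s',y}[\tau_h > t]$ by a constant multiple of $\P_{s',z}[\tau_h > t]$ for $z$ ranging over the support of $\nu_{s'}$, uniformly in $s'$. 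This in turn follows from the fact that from any $y \in E_{s'}$ one reaches, with probability bounded below uniformly, a point of $[-h_{\min}/2,h_{\min}/2]$ after a further time $t_0$ while surviving; combined with a reverse-direction estimate (from the center one can also survive to time $t$ with probability comparable to survival from any other point, using that the center is the "best" place to be up to a uniform factor — this is where the bounds $h_{\min} \le h \le h_{\max}$ and bounded $h'$ are used to keep the killed semigroup uniformly elliptic). Assembling these gives $(A'2)$ with a uniform $c_2 > 0$.

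The main obstacle is the uniformity in the starting time $s$ of the Gaussian lower bounds for the killed transition kernel and for survival probabilities: one must make sure the tube-straddling constants do not degenerate as $s \to \infty$, which is exactly guaranteed by the standing hypotheses $0 < h_{\min} \le h(t) \le h_{\max} < \infty$ and $\sup_t|h'(t)| < \infty$ — the first prevents the domain from pinching or escaping, the second controls how fast the boundary can move within a window of length $t_0$ so that a path kept inside $(-h_{\min},h_{\min})$ genuinely survives. A careful but routine way to package this is to dominate $h$ from below by the constant $h_{\min}$ (for lower bounds on survival) and from above by $h_{\max}$ (for upper bounds), reducing every estimate to the two time-homogeneous problems of Brownian motion killed at $\pm h_{\min}$ and at $\pm h_{\max}$, for which the required Gaussian bounds are classical; the bounded-slope hypothesis is then only needed to ensure $h$ itself (not just the constant envelopes) defines a genuine absorbing problem to which these comparisons apply. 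For $g$, periodicity makes all of this automatic on one period, hence uniform by $\gamma$-periodicity.
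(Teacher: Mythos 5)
The proof of (A'1) contains a genuine gap. You claim that the numerator $\P_{s,x}[W_{s+t_0} \in A,\ \tau_h > s+t_0]$ is bounded below by $\kappa_1 \,\mathrm{Leb}(A)$ uniformly in $x \in E_s$, and then dispose of the denominator by observing it is at most $1$. But this uniform lower bound on the numerator is false: as $x \to \partial E_s$, the probability of surviving up to time $s+t_0$ goes to $0$ (roughly linearly in the distance to the boundary), hence so does $\P_{s,x}[W_{s+t_0} \in A,\ \tau_h > s+t_0]$. To see this concretely, take $h \equiv 1$ and $x = 1-\varepsilon$: then $\P_{0,x}[\tau_1 > t_0] \asymp \varepsilon$, and the numerator for any fixed $A$ is $O(\varepsilon)$ as well. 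The Doeblin minorization $\P_{s,x}[W_{s+t_0} \in \cdot \mid \tau_h > s+t_0] \geq c_1 \nu_{s+t_0}$ is genuinely a statement about the \emph{ratio}, and both numerator and denominator must be controlled jointly so that the degeneration near the boundary cancels. Your scheme, which bounds only the numerator from below, cannot produce this. The paper sidesteps this difficulty by invoking the known fact (from \cite{CV2017b}) that Brownian motion killed at $\{-1,1\}$ satisfies the Champagnat--Villemonais Assumption (A) -- that result already encapsulates the delicate boundary cancellation -- and then transfers it via the scaling property (Lemma \ref{gaussian}/\ref{lemma-2}), a change of variables $X^h_t = W_t/h(t)$, Dubins--Schwarz, and Girsanov.

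A second issue is that you do not use the structural hypothesis \eqref{argmin} at all, whereas the paper uses it essentially. The paper's comparison $\P_{s,x}[\tau_h > \cdot] \geq \P_{s,x}[\tau_{h(s)} > \cdot]$ (freezing the boundary at the level $h(s)$) requires $h(u) \geq h(s)$ for $u \geq s$, which holds only at times $s \in \cT = \{s : h(s) = \inf_{u \geq s} h(u)\}$; condition \eqref{argmin} then lets one extend the minorization from $s \in \cT$ to arbitrary $s$ via the Markov property. Your fixed-tube approach would in principle avoid \eqref{argmin}, which is a nice feature, but it does not get the ratio under control, so it fails. Something similar affects your (A'2): the claim that ``from any $y \in E_{s'}$ one reaches $[-h_{\min}/2,h_{\min}/2]$ after time $t_0$ while surviving, with probability bounded below uniformly in $y$'' is again false for $y$ near the boundary. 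What actually saves (A'2) in the paper is the exact symmetry fact $\sup_{x} \P_{s,x}[\tau_h > t] = \P_{s,0}[\tau_h > t]$ (the centre is optimal for survival), which reduces the supremum to a single reference point $0$, followed by a hitting-time argument from a point $a$ bounded away from the boundary. You gesture at this (``the centre is the best place to be'') but your surrounding estimates inherit the same boundary-degeneration problem as in (A'1).
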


\begin{proof}
In what follows, Assumption (A') w.r.t. the absorbing function $h$ will be shown. The following proof could easily be adapted for the function $g$.
\begin{itemize}
\item \textit{Proof of (A'1).}
 Denote $\cT := \{s \geq 0 : h(s) = \inf_{t \geq s} h(t)\}$. The condition \eqref{argmin} implies that this set contains an infinity of times.
 
 In what follows, the following notation is needed: for any $z \in \R$, define $\tau_z$ as
 $$\tau_z := \inf\{t \geq 0 : |W_t| = z\}.$$
 Also, let us state that, since the Brownian motion absorbed at $\{-1,1\}$ satisfies Assumption (A) of \cite{CV2014} at any time (see \cite{CV2017b}), it follows that, for a given $t_0 > 0$, there exists $c > 0$ and $\nu \in \cM_1((-1,1))$ such that, for any $x \in (-1,1)$,
\begin{equation}
    \label{min}
    \P_{0,x}\left[W_{\frac{t_0}{h_{\max}^2} \land t_0} \in \cdot \middle| \tau_1 > \frac{t_0}{h_{\max}^2} \land t_0\right] \geq c \nu.
\end{equation}
Moreover, regarding the proof of \cite[Section 5.1]{CV2017b}, the probability measure $\nu$ can be expressed as
\begin{equation}
    \label{def-nu}
    \nu = \frac{1}{2}\left(\P_{0,1-\epsilon}[W_{t_2} \in \cdot | \tau_1 > t_2] + \P_{0,-1+\epsilon}[W_{t_2} \in \cdot | \tau_1 > t_2]\right),
\end{equation}
 for some $0 < t_2 < \frac{t_0}{h_{\max}^2} \land t_0$ and $\epsilon \in (0,1)$. 
 
 The following lemma is very important for the following.
 \begin{lemma}
 \label{lemma-2}
 For all $z \in [h_{\min},h_{max}]$,
$$\P_{0,x}[W_{u} \in \cdot | \tau_z > u] \geq c \nu_z,~~~~\forall x \in (-z,z), \forall u \geq t_0,$$
where $t_0$ was evoked before, $c > 0$ is the same constant as in \eqref{min} and  
$$\nu_z(f) = \int_{(-1,1)} f(z x) \nu(dx),$$
with $\nu \in \cM_1((-1,1))$ defined in \eqref{def-nu}. 
 \end{lemma}
 The proof of this lemma is postponed after the current proof.
 
 Let $s \in \cT$. Then, for all $x \in (-h(s),h(s))$ and $t \geq 0$,   
$$\P_{s,x}[W_{s+t} \in \cdot | \tau_{h} > s+t] \geq \frac{ \P_{s,x}[\tau_{h(s)} > s+t]}{\P_{s,x}[\tau_h > s+t]} \P_{s,x}[W_{s+t} \in \cdot | \tau_{h(s)} > s+t],$$
By Lemma \ref{lemma-2}, for all $x \in (-h(s),h(s))$ and $t \geq t_0$,
$$\P_{s,x}[W_{s+t} \in \cdot | \tau_{h(s)} > s+t] \geq c \nu_{h(s)},$$
which implies therefore that, for any $t \in [t_0, t_0 + n_0 \gamma]$,
\begin{align}
    \P_{s,x}[W_{s+t} \in \cdot | \tau_h > s+t] &\geq \frac{ \P_{s,x}[\tau_{h(s)} > s+t]}{\P_{s,x}[\tau_h > s+t]} c \nu_{h(s)} \notag \\
    &\geq \frac{ \P_{s,x}[\tau_{h(s)} > s+t_0+n_0 \gamma]}{\P_{s,x}[\tau_h > s+t_0]} c \nu_{h(s)}.
    \label{change}
\end{align}
Let us introduce the process $X^h$ defined by, for all $t \geq 0$, $$X^h_t := \frac{W_t}{h(t)}.$$
By Itô's formula, for any $t \geq 0$,
$$X^h_t = X^h_0 + \int_0^t \frac{dW_s}{h(s)} - \int_0^t \frac{h'(s)}{h(s)} X^h_s ds.$$
Denote by $(M^h_t)_{t \geq 0} := \left(\int_0^t \frac{1}{h(s)}dW_s\right)_{t \geq 0}$. By the Dubins-Schwarz theorem, it is well-known that the process $M^h$ has the same law as $\left(W_{\int_0^t \frac{1}{h^2(s)}ds}\right)_{t \geq 0}$. Then, denoting $I^h(s) := \int_0^s \frac{1}{h^2(u)}du$ and, for any $s \leq t$ and for any trajectory $w$,
\begin{align}
    \cE_{s,t}^h(w) := &\sqrt{\frac{h(t)}{h(s)}} \exp\left(-\frac{1}{2} \left[h'(t)h(t) w_{I^h(t)}^2 - h'(s)h(s) w_{I^h(s)}^2 + \int_{s}^{t} w_{I^h(u)}^2[(h'(u))^2 - [h(u)h'(u)]']du\right]\right),
    \label{girsanov-h}
\end{align}
Girsanov theorem implies that, for all $x \in (-h(s),h(s))$, 

    \begin{equation}\P_{s,x}[\tau_h > s+t_0] = \E_{I^h(s),\frac{x}{h(s)}}\left[\cE_{s,s+t_0}^h(W) \1_{\tau_1 > \int_0^{s+t_0} \frac{1}{h^2(u)} du}\right].
\end{equation}
On the event $\{\tau_1 > \int_0^{s+t_0} \frac{1}{h^2(u)} du\}$, and since $h$ and $h'$ are bounded on $\R_+$, the random variable $\cE_{s,s+t_0}^h(W)$ is almost surely bounded by a constant $C > 0$, uniformly in $s$, such that for all $x \in (-h(s),h(s))$,
\begin{equation}\E_{I^h(s),\frac{x}{h(s)}}\left[\cE_{s,s+t_0}^h(W) \1_{\tau_1 > \int_0^{s+t_0} \frac{1}{h^2(u)} du}\right] \leq C \P_{0,\frac{x}{h(s)}}\left[\tau_1 > \int_s^{s+t_0} \frac{1}{h^2(u)} du\right].\end{equation}
Since $h(t) \geq h(s)$ for all $t \geq s$ (since $s \in \cT$), $I^h(s+t_0) - I^h(s) \leq \frac{t_0}{h(s)^2}$. By the scaling property of the Brownian motion and by Markov property, one has for all $x \in (-h(s),h(s))$, 
\begin{align*}\P_{s,x}[\tau_{h(s)} > s+t_0] &= \P_{0,x}[\tau_{h(s)} > t_0] \\ &= \P_{0,\frac{x}{h(s)}}\left[\tau_1 > \frac{t_0}{h^2(s)}\right] \\&= \E_{0,\frac{x}{h(s)}}\left[\1_{\tau_1 > \int_s^{s+t_0} \frac{1}{h^2(u)}du} \P_{0,W_{ \int_s^{s+t_0} \frac{1}{h^2(u)}du}}\left[\tau_1 > \frac{t_0}{h^2(s)} - \int_s^{s+t_0} \frac{1}{h^2(s)}ds\right]\right] \\ &= \P_{0,\frac{x}{h(s)}}\left[\tau_1 > \int_s^{s+t_0} \frac{1}{h^2(u)} du\right] \P_{0,\phi_{I^h(s+t_0) - I^h(s)}(\delta_x)}\left[\tau_1 > \frac{t_0}{h^2(s)} - \int_s^{s+t_0} \frac{1}{h^2(u)}du\right],\end{align*}
where, for any initial distribution $\mu$ and any $t \geq 0$, 
$$\phi_{t}(\mu) := \P_{0,\mu}[W_t \in \cdot | \tau_1 > t].$$

The family $(\phi_t)_{t \geq 0}$ satisfies the equality $\phi_t \circ \phi_s = \phi_{t+s}$ for all $s,t \geq 0$. By this property, and using that $I^h(s+t_0) - I^h(s) \geq \frac{t_0}{h_{\max}^2}$ for any $s \geq 0$, the minorization \eqref{min} implies that, for all $s \geq 0$ and $x \in (-1,1)$,
$$\phi_{I^h(s+t_0) - I^h(s)}(\delta_x) \geq c \nu.$$
Hence, by this minorization, and using that $h$ is upper-bounded and lower-bounded positively on $\R_+$, one has for all $x \in (-1,1)$,
$$\P_{0,\phi_{I^h(s+t_0)-I^h(s)}(\delta_x)}\left[\tau_1 >  \frac{t_0}{h^2(s)} - \int_s^{s+t_0} \frac{1}{h^2(u)}du\right] \geq c \P_{0,\nu}\left[\tau_1 >  \inf_{s \geq 0} \left\{\frac{t_0}{h^2(s)} - \int_s^{s+t_0} \frac{1}{h^2(u)}du\right\}\right],$$
that is to say, 
$$\frac{\P_{s,x}[\tau_{h(s)} > s+t_0]}{\P_{0,\frac{x}{h(s)}}\left[\tau_1 > \int_s^{s+t_0} \frac{1}{h^2(u)} du\right]} \geq c \P_{0,\nu}\left[\tau_1 >  \inf_{s \geq 0} \left\{\frac{\gamma}{h^2(s)} - \int_s^{s+t_0} \frac{1}{h^2(u)}du\right\}\right].$$
In other words, we just showed that, for all $x \in (-h(s),h(s))$, 
\begin{equation}
    \label{change2}
    \frac{ \P_{s,x}[\tau_{h(s)} > s+t_0]}{\P_{s,x}[\tau_h > s+t_0]} \geq \frac{c}{C} \P_{0,\nu}\left[\tau_1 >  \inf_{s \geq 0} \left\{\frac{t_0}{h^2(s)} - \int_s^{s+t_0} \frac{1}{h^2(u)}du\right\}\right] > 0.
\end{equation}
Moreover, by Lemma \ref{lemma-2} and the scaling property of the Brownian motion, for all $x \in (-h(s),h(s))$,
\begin{align}
\frac{\P_{s,x}[\tau_{h(s)} > s + t_0 + n_0 \gamma]}{\P_{s,x}[\tau_{h(s)} > s+ t_0]} &= \P_{0,\P_{0,x}[W_{t_0} \in \cdot | \tau_{h(s)} > t_0]}[\tau_{h(s)} > n_0 \gamma] \notag \\ &\geq c \P_{0,\nu_{h(s)}}[\tau_{h(s)} > n_0 \gamma] \notag \\ &= c \int_{(-1,1)} \nu(dy) \P_{h(s)y}[\tau_{h(s)} > n_0 \gamma] \notag
\\ &\geq c \P_{0,\nu}\left[\tau_1 > \frac{n_0 \gamma}{h_{\min}^2}\right] > 0. \label{change3}
\end{align}
Thus, gathering \eqref{change}, \eqref{change2} and \eqref{change3}, for any $x \in (-h(s),h(s))$ and any $t \in [t_0, t_0 + n_0 \gamma]$,
\begin{equation}\label{minorization}\P_{s,x}[W_{s+t} \in \cdot | \tau_h > s+t] \geq c_1 \nu_{h(s)},\end{equation}
where $$c_1 := c \P_{0,\nu}\left[\tau_1 > \frac{n_0 \gamma}{h_{\max}^2}\right] \times \frac{c}{C} \P_{0,\nu}\left[\tau_1 >  \inf_{s \geq 0} \left\{\frac{\gamma}{h^2(s)} - \int_s^{s+\gamma} \frac{1}{h^2(u)}du\right\}\right] c.$$ 

We recall that the Doeblin condition \eqref{minorization} is, for now, only obtained for $s \in \cT$. Consider now $s \not \in \cT$. Then, by the condition \eqref{argmin}, there exists $s_1 \in \cT$ such that $s < s_1 \leq s + n_0 \gamma$. Markov property and \eqref{minorization} implies therefore that, for any $x \in (-h(s),h(s))$, 
$$\P_{s,x}[W_{s+t_0 + n_0 \gamma} \in \cdot | \tau_h > s+t_0 + n_0 \gamma] = \P_{s_1,\phi_{s_1,s}}[W_{s+t_0 + n_0 \gamma} \in \cdot | \tau_h > s+t_0 + n_0\gamma] \geq c_1 \nu_{h(s_1)},$$
where, for all $s \leq t$ and $\mu \in \cM_1((-h(s),h(s)))$, $$\phi_{t,s}(\mu) := \P_{s,\mu}[W_t \in \cdot | \tau_h > t].$$ This concludes the proof of (A'1).
\item \textit{Proof of (A'2).} Since $(W_t)_{t \geq 0}$ is a Brownian motion, note that, for any $s \leq t$,
$$\sup_{x \in (-1,1)} \P_{s,x}[\tau_h > t] = \P_{s,0}[\tau_h > t].$$
Also, for any $a \in (0,h(s))$,
$$\inf_{[-a,a]}\P_{s,x}[\tau_h > t] = \P_{s,a}[\tau_h > t].$$
Thus, by Markov property, and using that the function $s \mapsto \P_{s,0}[\tau_g > t]$ is non-decreasing on $[0,t]$ (for all $t \geq 0$), one has for any $s \leq t$,
\begin{align}
\label{example}
    \P_{s,a}[\tau_h > t] \geq \E_{s,a}[\1_{\tau_0 < s+\gamma < \tau_h} \P_{\tau_0,0}[\tau_h > t]] \geq \P_{s,a}[\tau_0 < s+\gamma < \tau_h] \P_{s,0}[\tau_h > t].
\end{align}
Denoting $a := \frac{h_{\min}}{h_{\max}}$, by Lemma \ref{lemma-2} and taking $s_1 := \inf\{u \geq s : u \in \cT\}$, one obtains that, for all $s \leq t$,
 \begin{align*}\P_{s,\nu_{h(s_1)}}[\tau_h > t] &= \int_{(-1,1)} \nu(dx) \P_{s,h(s_1)x}[\tau_h > t] \\ &\geq \nu([-a,a]) \P_{s,h(s_1)a}[\tau_h > t] \\ & \geq \nu([-a,a]) \P_{0,h_{\min}}[\tau_0 < \gamma < \tau_h] \sup_{x \in (-h(s),h(s))} \P_{s,x}[\tau_h > t].\end{align*}
 This concludes the proof since, using \eqref{def-nu}, one has $\nu([-a,a]) > 0$.
\end{itemize}
\end{proof}
We now prove Lemma \ref{lemma-2}. 
\begin{proof}[Proof of Lemma \ref{lemma-2}]
This result comes from the scaling property of a Brownian motion. In fact, for any $z \in [h_{\min},h_{\max}]$, $x \in (-z,z)$ and $t \geq 0$, and for any measurable bounded function $f$,
\begin{align*}
    \E_{0,x}[f(W_t) | \tau_z > t] &= \E_{0,x}\left[f\left(z \times \frac{1}{z}W_{z^2 \frac{t}{z^2}}\right) \middle| \tau_z > t\right] \\
    &= \E_{0,\frac{x}{z}}\left[f\left(z \times W_{\frac{t}{z^2}}\right) \middle| \tau_1 > \frac{t}{z^2}\right].
\end{align*}
Then, the minorization \eqref{min} implies that, for any $x \in (-1,1)$,
$$\P_{0,x}\left[W_{\frac{t_0}{h_{\max}^2}} \in \cdot \middle| \tau_1 > \frac{t_0}{h_{\max}^2}\right] \geq c \nu.$$
This inequality holds for any time greater than $\frac{t_0}{h_{\max}^2}$. In particular, for any $z \in [h_{\min},h_{\max}]$ and $x \in (-1,1)$,
$$\P_{0,x}\left[W_{\frac{t_0}{z^2}} \in \cdot \middle| \tau_1 > \frac{t_0}{z^2}\right] \geq c \nu.$$
Then, for any $z \in [a,b]$, $f$ positive and measurable, and $x \in (-z,z)$, 
$$\E_{0,x}[f(W_{t_0}) | \tau_z > t_0] \geq c \nu_z\left(f\right),$$
where $\nu_z(f) := \int_E f(z \times x)\nu(dx)$.
This completes the proof of Lemma \ref{lemma-2}.

\end{proof}
The section is now concluded by stating and proving the following result.
\begin{theorem}
For any $s \leq t$, $n \in \N$ and any $x \in \R$, $$\P_{s+k \gamma, x}[\tau_h \leq t+k \gamma < \tau_g] \underset{k \to \infty}{\longrightarrow} 0.$$
In particular, Corollary \ref{qsd-cor} holds for $(W_t)_{t \geq 0}$ absorbed by $h$. 
\end{theorem}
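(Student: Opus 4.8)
The plan is to prove the convergence by rescaling $W$ by the periodic boundary $g$, using the $\gamma$-periodicity of $g$ to strip off the dependence on $k$, and then concluding by continuity from above of a probability measure; no Girsanov estimate is needed. Fix $s\le t$ and $x\in\R$ and set $T_k:=t+k\gamma$. If $|x|\ge g(s)$ then, since $g(s+k\gamma)=g(s)$ by $\gamma$-periodicity, $|W_{s+k\gamma}|\ge g(s+k\gamma)$ under $\P_{s+k\gamma,x}$, hence $\tau_g=s+k\gamma\le T_k$ and $\{\tau_h\le T_k<\tau_g\}=\emptyset$, so the probability is $0$ for every $k$. Assume therefore $|x|<g(s)$. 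Since $h(s+k\gamma)\to g(s)$ as $k\to\infty$ (using $h\sim_{t\to\infty}g$ and $h_{\min}>0$), for all $k$ large enough one also has $|x|<h(s+k\gamma)$, so that under $\P_{s+k\gamma,x}$ both $\tau_h$ and $\tau_g$ are strictly larger than $s+k\gamma$; I restrict to such $k$.

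Next I would pass to the rescaled process $X^g_u:=W_u/g(u)$, $u\ge s+k\gamma$, in the spirit of the process $X^h$ used above. Since $g$ is continuous and positive and $|W_{s+k\gamma}|/g(s+k\gamma)<1$, path-continuity gives $\{\tau_g>T_k\}=\{\sup_{u\in[s+k\gamma,T_k]}|X^g_u|<1\}$, and on this event $\tau_h$ is the first $u\ge s+k\gamma$ with $|X^g_u|=\rho_k(u)$, where $\rho_k(u):=h(u)/g(u)\le1$. Consequently $\{\tau_h\le T_k<\tau_g\}$ forces $\sup_{u\in[s+k\gamma,T_k]}|X^g_u|\ge\rho_k(\tau_h)\ge\tilde\rho_k$, with $\tilde\rho_k:=\inf_{u\ge s+k\gamma}\rho_k(u)$, so that
\[
\P_{s+k\gamma,x}[\tau_h\le T_k<\tau_g]\ \le\ \P_{s+k\gamma,x}\Big[\tilde\rho_k\le \sup_{u\in[s+k\gamma,T_k]}|X^g_u|<1\Big].
\]
The point that makes this useful is that the law of $M:=\sup_{u\in[s+k\gamma,T_k]}|X^g_u|$ does not depend on $k$: under $\P_{s+k\gamma,x}$ the shifted process $(W_{s+k\gamma+v})_{v\ge0}$ is a Brownian motion started at $x$, while $g(s+k\gamma+v)=g(s+v)$ by $\gamma$-periodicity, so $(X^g_{s+k\gamma+v})_{v\in[0,t-s]}$ equals in law the fixed process $\big((x+B_v)/g(s+v)\big)_{v\in[0,t-s]}$ with $B$ a standard Brownian motion from $0$.

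Finally, I would use that $\tilde\rho_k$ is non-decreasing in $k$ (an infimum over a shrinking set), is bounded by $1$, and satisfies $\tilde\rho_k\uparrow1$ (again from $h\sim g$ and $h_{\min}>0$); hence the events $\{M\ge\tilde\rho_k\}$ decrease, as $k\to\infty$, to $\{M\ge1\}$, and continuity from above of $\P$ yields
\[
\P_{s+k\gamma,x}[\tau_h\le T_k<\tau_g]\ \le\ \P[\tilde\rho_k\le M<1]\ =\ \P[M\ge\tilde\rho_k]-\P[M\ge1]\ \underset{k\to\infty}{\longrightarrow}\ 0,
\]
which is the claimed limit. For the consequence, I apply this with $t:=s+n\gamma$ and $x:=x_s:=0\in E'_s=(-g(s),g(s))$: since $\tau_h\le\tau_g$ (because $h\le g$) one has $\{\tau_h>s+(k+n)\gamma\}\subseteq\{\tau_g>s+(k+n)\gamma\}$ with difference equal to $\P_{s+k\gamma,0}[\tau_h\le s+(k+n)\gamma<\tau_g]\to0$, while $\P_{s,0}[\tau_g>s+n\gamma]=\P_{s+k\gamma,0}[\tau_g>s+(k+n)\gamma]$ by $\gamma$-periodicity of $g$; hence $\P_{s+k\gamma,0}[\tau_h>s+(k+n)\gamma]\to\P_{s,0}[\tau_g>s+n\gamma]$. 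Taking $A_t=\{|w|\ge h(t)\}$ and the $\gamma$-periodic $B_t=\{|w|\ge g(t)\}\subset A_t$, and recalling that Assumption (A') holds for the absorptions by both $h$ and $g$ (the Proposition above), Corollary \ref{qsd-cor} applies and delivers \eqref{uniform}.

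The step I expect to be the genuine obstacle is recognising that the tempting route — conditioning at $\tau_h$ via the strong Markov property and estimating the survival probability of a process started near the boundary — fails, because when $\tau_h$ lands close to $T_k$ that conditional survival probability is close to $1$. The remedy is to keep the two constraints "$\sup|X^g|$ reaches the band $[\tilde\rho_k,1)$ over the whole window" and "$\sup|X^g|$ stays below $1$" coupled, and only then shrink the band; the $\gamma$-periodicity of $g$ is precisely what makes the relevant distribution $k$-independent, so that shrinking the band becomes a bona fide continuity-from-above statement. The rest is bookkeeping: the edge case $|x|\ge g(s)$, and the fact that for all large $k$ one starts strictly inside the $h$-tube.
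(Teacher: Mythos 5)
Your proof is correct, and it takes a genuinely different route from the paper's. The paper proves the companion statement $\P_{s+k\gamma,x}[\tau_h>t+k\gamma]\to\P_{s,x}[\tau_g>t]$ directly, by applying Girsanov to the rescaled process $W_u/h(u)$, writing $\P_{s+k\gamma,x}[\tau_h>t+k\gamma]$ as an expectation of the exponential weight $\cE^h_{s+k\gamma,t+k\gamma}(W)$ on the survival event for the constant boundary $\{\pm1\}$, and then invoking dominated convergence using both $h\sim g$ and $h'\sim g'$ to pass to the limit inside the exponent. Your argument instead rescales by the \emph{periodic} boundary $g$, so that the running maximum $M=\sup_{v\in[0,t-s]}|(x+B_v)/g(s+v)|$ has a $k$-independent law, sandwiches the event $\{\tau_h\le T_k<\tau_g\}$ between the two constraints $\tilde\rho_k\le M<1$, and then lets the band collapse by continuity from above of $\P$. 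This is shorter and more elementary: no absolute-continuity computation is needed, and — worth noting — it uses only $h\sim g$ and $h\le g$, not the derivative condition $h'\sim g'$ on which the paper's Girsanov exponent relies (that derivative hypothesis is still used elsewhere, in the verification of Assumption (A'), but not for this convergence). Your handling of the edge cases ($|x|\ge g(s)$ and the identification $E'_s=(-g(s),g(s))$, $x_s=0$) and the final reduction to Corollary \ref{qsd-cor} via $B_t\subset A_t$ and the periodicity identity $\P_{s,0}[\tau_g>s+n\gamma]=\P_{s+k\gamma,0}[\tau_g>s+(k+n)\gamma]$ are all accurate.
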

\begin{proof} Reminding \eqref{girsanov-h}, by Markov property for the Brownian motion, one has for any $k,n \in \N$ and any $x \in \R$,
\begin{align*}
    \P_{s+k\gamma,x}[\tau_h > t+k\gamma] &= \sqrt{\frac{h(t+k\gamma)}{h(s+k \gamma)}} \E_{0,x}\left[\exp\left(-\frac{1}{2} \cA^h_{s,t,k}(W) \right) \1_{\tau_1 > I^h(t+k\gamma) - I^h(s+k \gamma)}\right], 
\end{align*}
where, for any trajectory $w = (w_u)_{u \geq 0}$,
\begin{multline*}
    \cA_{s,t,k}^h(w) =h'(t+k\gamma)h(t+k\gamma) w_{I^h(t+k\gamma) - I^h(s+k \gamma)}^2 - h'(s+k \gamma)h(s+k \gamma) w_0^2 \\ + \int_{0}^{t-s} w_{I^h(u+s+k\gamma) - I^h(s+k \gamma)}^2[(h'(u+s+k\gamma))^2 - [h(u+s+k \gamma)h'(u+s+k\gamma)]']du.
\end{multline*}
Since $h \sim_{t \to \infty} g$, one has for any $s,t \in [0, \gamma]$
$$\sqrt{\frac{h(t+k\gamma)}{h(s+k \gamma)}} \underset{k \to \infty}{\longrightarrow} \sqrt{\frac{g(t)}{g(s)}}.$$
For the same reasons, and using that the function $h$ is bounded on $[s+k \gamma, t+k\gamma]$ for all $s \leq t$, Lebesgue's dominated convergence theorem implies that 
$$I^h(t+k\gamma) - I^h(s+k\gamma) \underset{k \to \infty}{\longrightarrow} I^g(t) - I^g(s)$$
for all $s \leq t \in [0,\gamma]$. 
Moreover, since $h \sim_{t \to \infty} g$ and $h' \sim_{t \to \infty} g'$, one has for all trajectories $w = (w_u)_{u \geq 0}$ and $s \leq t \in [0,\gamma]$,  
$$ \cA_{s,t,k}^h(w) \underset{k \to \infty}{\longrightarrow} g'(t)g(t) w_{I^g(t) - I^g(s)}^2 - g'(s)g(s)w_0^2 + \int_{s}^{t} w_{I^g(u)}^2[(g'(u))^2 - [g(u)g'(u)]']du.$$
Since the random variable $\exp\left(-\frac{1}{2} \cA^h_{s,t,k}(W) \right) \1_{\tau_1 > I^h(t+k\gamma) - I^h(s+k \gamma)}$ is bounded almost surely, Lebesgue's dominated convergence theorem implies that 
$$\P_{s+k \gamma, x}[\tau_h > t+k \gamma] \underset{k \to \infty}{\longrightarrow} \P_{s,x}[\tau_g > t],$$
which concludes the proof. 
\end{proof}



\section*{Acknowledgement}

I would like to thank the anonymous reviewers for their valuable and relevant comments and suggestions, as well as Oliver Kelsey Tough for reviewing a part of this paper.

\bibliographystyle{abbrv}
\bibliography{biblio}

\end{document}